\author{Eric Foxall}
\DeclareMathOperator{\argmin}{argmin}
\DeclareMathOperator{\base}{base}
\DeclareMathOperator{\intr}{int}
\DeclareMathOperator{\dist}{d}
\numberwithin{equation}{section}
\newtheorem{definition}{Definition}[section]
\newtheorem{theorem}{Theorem}[section]
\newtheorem{lemma}{Lemma}[section]
\title{The SEIS model, or, the \\contact process with a latent stage}
\begin{document}
\maketitle



\begin{abstract}
The susceptible-exposed-infectious-susceptible (SEIS) model is well-known in mathematical epidemiology as a model of infection in which there is a latent period between the moment of infection and the onset of infectiousness.  The compartment model is well studied, but the corresponding particle system has so far received no attention.  For the particle system model in one spatial dimension, we give upper and lower bounds on the critical values, prove convergence of critical values in the limit of small and large latent time, and identify a limiting process to which the SEIS model converges in the limit of large latent time.
\end{abstract}

\noindent\textbf{Keywords: }SEIS model, Contact process, Interacting Particle Systems\\
\textbf{MSC 2010: }60J25, 92B99\\


\section{Introduction}\label{secintro}
The SEIS model is a model of the spread of an infection that in addition to the usual susceptible and infectious classes includes an exposed class that is infected but not yet infectious; it can be used to model infections such as gonorrhea in which there is a short latent stage before the onset of infectiousness, and in which recovery from the infection confers no immunity.  The classical model, usually called a compartment model, is deterministic and consists of a set of three differential equations describing the evolution of the number of susceptible, exposed and infectious individuals (the three compartments), which for simplicity are taken to be real-valued; for a formal definition see \cite{epid}, Chapter 2.  The model has either a globally stable disease-free state or an unstable disease-free state together with a globally stable endemic state, according as the basic reproduction number for the infection is $\leq 1$ or $>1$; see \cite{koro} for a proof using Lyapunov functions.\\

Now, the classical SEIS model is deterministic and assumes that the population is well-mixed.  However, there is a natural way to define an SEIS model that incorporates both spatial and random effects, using an interacting particle system (see \cite{ips} for an introduction to interacting particle systems).  For the simpler SIS model with no exposed class, this system is called the contact process, and has been well studied over the last forty years, in a variety of spatial settings including the $d$-dimensional integer lattices, trees, random graphs, and even more general sequences of finite graphs; \cite{ips} and \cite{sis} give an overview of results up to 1985 and 1999 respectively, and \cite{rgd} includes a survey of results on random graphs up to about 2009; a recent result on fairly general sequences of finite graphs can be found in \cite{exp}.\\

For the contact process on the $d$-dimensional lattice $\mathbb{Z}^d$ with a single initially infectious site there is a critical value $\lambda_c$ of the infection parameter $\lambda$ such that for $\lambda\leq\lambda_c$ the process dies out with probability 1, and for $\lambda > \lambda_c$ the process survives with a positive probability, spreading linearly in time and converging to a non-trivial invariant measure when it survives; for a proof of convergence see \cite{sis}, Part I, and for a proof of linear spread in $d=1$ and linear spread with convergence to a limiting shape in $d \geq 2$, see \cite{speed} and \cite{shape}.\\

According to numerical simulations in $d=1,2$, the SEIS model behaves in the same way as the contact process, in this case with a critical value that varies slightly with the average latent time, and spreading linearly in time when it survives.  However, for the SEIS model it is not clear how to prove this, because of the absence of a property called monotonicity that enables much of the analysis of the contact process.  Nevertheless, we can show that the infection survives when the infection parameter is large enough, uniformly in the latent time, and we can obtain reasonable bounds when the latent time is either very large or very small.  In addition, in the limit of large latent time the model, when properly rescaled in time, approaches a limit process, and we describe the limit process and the convergence to the limit process.  We begin by describing the process and summarizing the main results.\\

\section{Main Results}
To distinguish it from the compartment model, we use ``SEIS process'' to refer to the SEIS model as an interacting particle system.  Given a finite or countably infinite connected undirected graph $G=(V,E)$ with bounded degree i.e., for some $d<\infty$, $|\{y \in V:xy \in E\}| \leq d$ for each $x \in V$, the SEIS process with infection parameter $\lambda >0$ and average latent time $\tau \geq 0$ is defined as follows.  Letting $0$ denote susceptible, $1$ denote exposed and $2$ denote infectious, each site $x \in V$ is in one of the states $0,1$ or $2$ (that we later refer to as \emph{types}), with transitions
\begin{itemize}
\item $0\rightarrow 1$ at rate $\lambda n_2(x)$ (transmission)
\item $1\rightarrow 2$ at rate $1/\tau$ or instantaneously if $\tau=0$ (onset)
\item $2\rightarrow 0$ at rate 1 (recovery)
\end{itemize}
where $n_2(x)$ is cardinality of the set $\{xy\in E:y \textrm{ is in state 2 }\}$.  The case $\tau=0$ is the contact process with transmission parameter $\lambda$.  The meaning of ``rate'' is that in the absence of other transitions, each transition occurs after an amount of time which is exponentially distributed with parameter given by the rate.\\

A standard reference on particle systems, and methods for constructing them, can be found in \cite{ips}.  Since it will help us later on, we follow \cite{gc} and use a graphical representation to construct the process.  We begin with the \emph{spacetime set} $\mathcal{S} = G\times [0,\infty)$, which we picture as a copy of $G$ extruded upward along fibers in the increasing time direction; this is particularly easy to imagine if $G$ is a planar graph.  When required we use the topology on $\mathcal{S}$ with base $\{a\}\times (t,t'):a \in V \cup E, t<t'\}$.  Place independent 1-dimensional Poisson point processes (p.p.p.'s) along fibers $\{\cdot\}\times [0,\infty)$ as follows:
\begin{itemize}
\item at each site $x \in V$, recovery with intensity 1 and label $\times$,
\item at each site $x \in V$, onset with intensity $1/\tau$ and label $\star$ if $\tau>0$, or omitting if $\tau=0$, and
\item along each edge $xy \in E$, transmission with intensity $\lambda$ and label $\leftrightarrow$.
\end{itemize}
This furnishes the probability space $\Omega$, which we can think of as a random labelling of $\mathcal{S}$, and which we refer to as the \emph{substructure}; the notation $\mathbb{P}$ is used to denote the law of the substructure, and when necessary, we write for example $\mathbb{P}_{\lambda}$ or $\mathbb{P}_{\tau}$ to emphasize the dependence on parameters.  Define the following notation: for a Borel measurable set $R \subset \mathcal{S}$ let $\mathcal{F}(R)$ denote the $\sigma$-algebra generated by the restriction of the substructure to $R$, and for $t>0$ let $\mathcal{F}(t)=\mathcal{F}(G\times [0,t])$.  If we view the substructure as a function of time $U_t$ then $U_t$ is adapted to the filtration $\mathcal{F}(t)$ and it follows from the strong Markov property applied to $U_t$ that for any stopping time $s$ the law of $\mathbb{E}(U_{t+s}|\mathcal{F}(s))$ is the same as the law of $U_t$.  Also, it follows from standard properties of p.p.p's that if $\{R_i:i=1,2,...\}$ are pairwise disjoint then $\{\mathcal{F}(R_i):i=1,2,...\}$ are independent; the same is true if the sets are disjoint up to measure zero in the measure on $\mathcal{S}$ given by the product of counting measure on edges and vertices of $G$ with Lebesgue measure on $[0,\infty)$.  Both of these facts will be useful throughout the paper.\\

Given an initial configuration $\eta_0 \in \{0,1,2\}^V$, to determine $\eta_t(x)$ for each realization $\omega \in \Omega$ consider the set $T_t(x)$ of points $(u,s) \in G\times [0,t]$ that can reach $(x,t)$ by moving either upwards in time along vertices or horizontally along transmission labels $\leftrightarrow$.  In order to compute $\eta_t(x)$ from the transition labels it suffices to compute $\eta_s(y)$ for $(y,s) \in T_t(x)$.  By a simple comparison, $|\{u\in G:(u,t-s) \in T_t(x)\}|$ is bounded above by a branching process with no deaths in which pairs of offspring are produced at rate $\lambda d$, so with probability $1$, $T_t(x)$ is a bounded set, and it follows easily (for example, by considering the events $T_t(x) \subset G_n\times [0,t]$ for a sequence of graphs $G_n$ with $\cup G_n = G$) that the number of labels in $T_t(x)$ is almost surely finite.  Denote the timing of labels by $t_1<t_2<...<t_m$, then given $\eta_{t_i}(x)$ for $x$ such that $(x,t_i)\in T_t(x)$, if the label at time $t_{i+1}$ is
\begin{itemize}
\item $\times$ at $x$ and $\eta_{t_i}(x)=2$ then $\eta_{t_{i+1}}(x)=0$,
\item $\star$ at $x$ and $\eta_{t_i}(x)=1$ then $\eta_{t_{i+1}}(x)=2$,
\item $\leftrightarrow$ along $xy$ and $\eta_{t_i}(x)=0$, $\eta_{t_i}(y)=2$ then $\eta_{t_{i+1}}(x)=1$ if $\tau>0$ and $\eta_{t_{i+1}}(x)=2$ if $\tau=0$.
\end{itemize}
otherwise nothing happens.  Then, let $\eta_t(x) = \eta_{t_m}(x)$.  The reader may easily verify that this approach defines $\eta_t(x)$ for all $x \in V$, $t\geq 0$ in a consistent manner.  A depiction of this construction is given in Figure \ref{graphfig}.  For what follows, say that $x$ is \emph{active} at time $t$ if $\eta_t(x)\neq 0$.  Letting $\mathcal{C}_0$ denote the set of configurations having only finitely many active sites, if $\eta_0 \in \mathcal{C}_0$ then bounding the number of active sites by a branching process in which each particle produces offspring at rate $\lambda$ it follows that $\eta_t \in \mathcal{C}_0$ for $t>0$, and $\eta_t$ behaves like a continuous time Markov chain on $\mathcal{C}_0$ in the sense of \cite{norris}, with transition rates as specified in the description of the model.\\

\begin{figure}
\centering{\includegraphics[width=120mm,height=83mm]{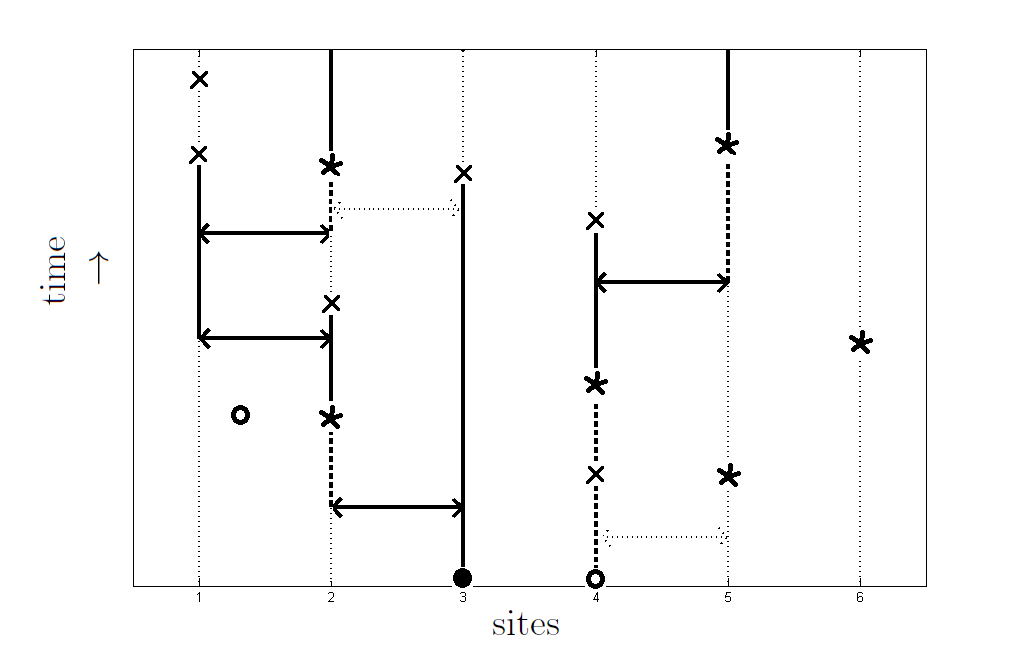}}
\caption{Depiction of the graphical construction for the SEIS model, starting from site 4 exposed and site 3 infectious.  Bold dashed lines denote exposed sites, and bold solid lines denote infectious sites.}
\label{graphfig}
\end{figure}

The above graphical representation supplies a natural coupling of the process for all choices of $\eta_0$, namely the one in which, for each $\eta_0$, $\eta_t$ is determined from $\eta_0$ for $t>0$ via the substructure.  With respect to this coupling, the reader may verify that the contact process, which is the case $\tau=0$, is \emph{monotone} in the partial order $\eta \leq \eta' \Leftrightarrow \forall x, \eta(x)\leq \eta'(x)$ in the sense that $\eta_0\leq\eta_0'$ implies $\eta_t\leq\eta_t'$ for $t>0$.  In fact, the process is also monotone with respect to $\lambda$:  if $\lambda<\lambda'$ then we can couple processes $\eta_t$ with parameter $\lambda$ and $\eta_t'$ with parameter $\lambda'$ so that $\eta_0 \leq \eta_0'$ implies $\eta_t \leq \eta_t'$ for $t>0$.  To do so, for transmission events place independent p.p.p.'s along each edge
\begin{itemize}
\item with intensity $\lambda$ and label $\leftrightarrow$ 
\item with intensity $\lambda'-\lambda$ and label $\leftrightarrow '$
\end{itemize}
with recovery events as before, and for transmission events, for $\eta_t$ use only the labels $\leftrightarrow$, while for $\eta_t'$ use both the labels $\leftrightarrow$ and $\leftrightarrow '$.  Using this fact, and the fact that any configuration with a positive and finite number of active sites can reach any other such configuration, it follows directly that there is a critical value of the transmission parameter that we denote $\lambda_c^0$ (which may a priori be equal to $0$ or $\infty$) such that the infection survives with positive probability when $\lambda>\lambda_c^0$ and $|\eta_0|\geq 1$ ($|\eta|$ denotes the number of active sites in $\eta$) and dies out with probability 1 when $\lambda<\lambda_c^0$ and $|\eta_0|<\infty$, where \emph{survival} means $|\eta_t|>0\,\,\forall t$ and \emph{dying out} means $|\eta_t|=0$ for $t$ large enough.   Whenever we refer to critical values in what follows, they will have this property; the only exceptions are the upper and lower critical values for the SEIS process defined below, for which the above property is split between the two.\\

Given that $\tau=0$ gives the contact process it is natural to ask whether we obtain something as $\tau\rightarrow\infty$.  The answer is yes, if we rescale time so that onset occurs at rate $1$.  We first describe the limit process, then state the sense in which the SEIS process converges to it.  The limit process has the state space $\{0,1\}^V$ where $1$ can be thought of as occupied and $0$ as vacant.  It is defined using the \emph{dispersal distributions} $p(x,\cdot)$ given by letting $p(x,A)$ be equal to the probability that for the contact process with the single infectious site $x$, transmission from $x$ to every site in $A$ occurs, followed by recovery at $x$, without transmission to any sites in $A^c$, and ignoring subsequent transmissions from other newly infected sites.  Each occupied site $x$ becomes vacant at rate $1$, at which point, with probability $p(x,A)$ all the vacant sites in $A$ become occupied.\\
	
There is an obvious graphical representation of the limit process:  at each site place a p.p.p. with intensity $1$ and label $\star$, and at each occurrence of $\star$ at site $x$ sample the dispersal distribution $p(x,\cdot)$, placing a $\rightarrow$ label from $x$ to $y$ for each $y$ to which $x$ disperses, and let the samples be independent.  The rest of the construction follows the same pattern so we omit the details.  It is easy to see the limit process is monotone, and is also monotone in $\lambda$; to see the latter property, for $\lambda<\lambda'$ make a joint construction by coupling dispersal distributions in the obvious way.  Thus the limit process has a critical value that we denote $\lambda_c^{\infty}$ such that the same dichotomy holds as for the contact process above.  The following result describes convergence of the SEIS process to the limit process.\\

\begin{theorem}\label{thmlimit}
For fixed $\lambda$, let $\eta_t$ denote the SEIS process on a countable graph with bounded degree, under the rescaling $t\mapsto t/\tau$, and let $\zeta_t$ denote the limit process.  Let $S = \{t:\eta_t(x)=2 \textrm{ for some }x\}$ denote the set of times when the rescaled SEIS process has an infectious site.  Fix $T>0$ and an initial state with no infectious sites and finitely many exposed sites, then for each $\tau$ there is a coupling of $\eta_t$ and $\zeta_t$ so that with probability tending to $1$ as $\tau\rightarrow\infty$,
\begin{itemize}
\item $\zeta_t=\eta_t$ for $t \in [0,T]\setminus S$ and 
\item $\ell(S\cap [0,T])\rightarrow 0$ where $\ell$ is Lebesgue measure on the line.
\end{itemize}
\end{theorem}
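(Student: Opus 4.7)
The plan rests on the observation that in the rescaled time frame the rates become: onset at rate $1$, recovery at rate $\tau$, and transmission at rate $\lambda\tau$. An infectious site thus has lifetime $\mathrm{Exp}(\tau)=O(1/\tau)$, and by the time change $t\mapsto\tau t$ the set of susceptible neighbors to which it transmits before recovery has exactly the distribution $p(x,\cdot)$ of the single-site contact process dispersal. I would couple the two processes so that each onset event of $\eta$ directly drives the corresponding dispersal of $\zeta$.

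For the coupling, construct $\eta$ via the graphical substructure and use the same onset clocks for $\zeta$. For each onset event at $(x,s)$, let $R_{x,s}$ be the time to the next recovery clock at $x$ and let $\tilde A_{x,s}$ be the set of neighbors $y$ of $x$ such that a transmission clock on $xy$ fires in $[s,s+R_{x,s}]$. Call the burst \emph{clean} if (i) $\eta_{s^-}$ has no infectious sites and (ii) no other onset clock fires in $[s,s+R_{x,s}]$. On a clean burst, $\eta$ evolves deterministically in $[s,s+R_{x,s}]$: $x$ is infectious throughout, each previously susceptible $y\in\tilde A_{x,s}$ becomes exposed at the time of its first transmission from $x$, and $x$ returns to state $0$ at $s+R_{x,s}$. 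Independently for each onset, sample $A_{x,s}\sim p(x,\cdot)$ via the maximal coupling to $\tilde A_{x,s}$, so that $A_{x,s}=\tilde A_{x,s}$ whenever the burst is clean; this is legitimate because the conditional law of $\tilde A_{x,s}$ on the cleanness of its own burst is exactly $p(x,\cdot)$, as the recovery at $x$ and transmission clocks on edges incident to $x$ are independent of the onset clocks that appear in the cleanness condition. Then $\zeta$ is defined as the limit process driven by these dispersal samples and the common onset clocks.

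To control the failure probability, bound the number of active sites in either process by a continuous-time branching process with unit birth rate and offspring at most $d$ per birth, whose total population in $[0,T]$ is a.s.\ finite and has a law not depending on $\tau$. Given $\varepsilon>0$, choose $M$ so that $\mathbb{P}(N_T>M)<\varepsilon$, where $N_T$ is the number of onset events in $[0,T]$. Conditional on $N_T\leq M$, at any moment there are at most $M$ exposed sites, so by Markov's inequality the probability that some unwanted onset clock fires during a given burst of length $\mathrm{Exp}(\tau)$ is at most $M/\tau$; a union bound over the $\leq M$ bursts gives total failure probability $O(M^2/\tau)\to 0$. On the event that all bursts are clean, $A_{x,s}=\tilde A_{x,s}$ at every onset, so outside the burst intervals $\eta$ has no infectious sites and its exposed sites coincide with $\zeta$'s occupied sites, giving $\eta_t=\zeta_t$ for $t\in[0,T]\setminus S$; moreover $\ell(S\cap[0,T])\leq\sum_i R_{x_i,s_i}$ is a sum of at most $M$ independent $\mathrm{Exp}(\tau)$ variables with mean $\leq M/\tau\to 0$, which therefore tends to $0$ in probability.

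The main obstacle I anticipate is the consistent global coupling of the $A_{x,s}$ with the $\tilde A_{x,s}$ across all onset events, ensuring that $\zeta$ retains its iid $p(x,\cdot)$ dispersal joint law under the pathwise coupling. This reduces to the observation that on the global clean event the bursts are disjoint in time and each $\tilde A_{x,s}$ is measurable with respect to the restriction of the substructure to local clocks at its root during its own burst window, so that the substructure pieces used by different clean $\tilde A_{x,s}$ lie in pairwise disjoint Borel subsets of $\mathcal{S}$ and are hence independent, by the paper's earlier remark on independence of $\mathcal{F}(R_i)$ for disjoint $R_i$. Once this bookkeeping is handled, both bulleted conclusions follow from the estimates above.
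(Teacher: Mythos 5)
Your architecture is essentially the paper's: read off the dispersal of $\zeta$ from the burst that follows each onset, declare a burst bad if another onset interferes, and bound $\ell(S\cap[0,T])$ by a sum of $\mathrm{Exp}(\tau)$ burst lengths (that last estimate is fine). The genuine gap is in the step that is supposed to make your $\zeta$ an exact copy of the limit process. You claim the conditional law of $\tilde A_{x,s}$ given cleanness of its own burst is exactly $p(x,\cdot)$ because the recovery and transmission clocks at $x$ are independent of the onset clocks entering the cleanness condition. That independence is not enough: the cleanness event is $\{\textrm{no other onset in }[s,s+R_{x,s}]\}$, and the window length $R_{x,s}$ is precisely the variable that, together with the transmission clocks, determines $\tilde A_{x,s}$. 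Writing $\mathbb{P}(\tilde A=A\mid \textrm{clean})=\mathbb{P}(\tilde A=A)\,\mathbb{P}(\textrm{clean}\mid \tilde A=A)/\mathbb{P}(\textrm{clean})$, the factor $\mathbb{P}(\textrm{clean}\mid \tilde A=A)$ genuinely depends on $A$ (conditioning on cleanness size-biases $R_{x,s}$ downward and hence tilts $\tilde A_{x,s}$ toward smaller sets), so no coupling can simultaneously give $A_{x,s}\sim p(x,\cdot)$ exactly, independently across bursts and of the onset clocks, \emph{and} $A_{x,s}=\tilde A_{x,s}$ on the whole clean event. This is repairable: either couple so that $\mathbb{P}(A_{x,s}\neq\tilde A_{x,s})$ is bounded by the total variation distance, which is at most of order $\mathbb{P}(\textrm{unclean})$ and hence vanishes, still giving agreement off $S$ with probability tending to $1$; or do what the paper does, namely let $\zeta$ use $\tilde A$ itself for as long as no discrepancy has occurred and switch to an independent substructure at the first discrepancy, the strong Markov property at the onset times guaranteeing the dispersal has law $p(x_j,\cdot)$ given the past. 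As written, though, the maximal-coupling justification is wrong.

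A second, related point: you work directly on the countable graph, so condition (ii) of cleanness cannot mean ``no onset label at any site'' (that has probability zero), while ``no onset at a currently exposed site'' is too weak: a $\star$ label at a site $y\in\tilde A_{x,s}$ falling after the onset at $x$ but before the transmission to $y$ is ignored by $\eta$ (where $y$ is still susceptible) but would trigger a dispersal in $\zeta$ driven by the common clocks, so the two processes disagree after the burst even though your cleanness holds. You need the no-onset requirement over a finite random set containing every site active in either process during the burst (say active sites together with the neighbors of the burst site); your tightness bound does make such a set finite, but this, together with the fact that a single burst can cause more than $d$ exposures through re-exposure of a recovered neighbor (so ``offspring at most $d$'' needs adjusting), has to be made explicit. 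The paper avoids all of this bookkeeping by first reducing to a finite graph, where the total $\star$ intensity is finite and Lemma \ref{onset} forbids any $\star$ label anywhere between an onset and the next recovery, and then arguing that the restriction to a large finite ball agrees with the full process on $[0,T]$ with high probability.
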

The main idea of the proof is that with probability tending to $1$ as $\tau\rightarrow\infty$ in the SEIS process, between any two onset events a recovery event occurs, and when this happens the SEIS process behaves like the limit process.  The assumption of finitely many initially active sites is necessary.\\

Unfortunately, unlike the contact process or the limit process, with respect to the graphical representation given above, for $\tau>0$ the SEIS process is not monotone in the partial order induced by the order $0<1<2$ on types (or, it can be checked, for any other order, though $0<2<1$ is the only other real possibility), since if we take configurations $\eta\leq\eta'$ with $\eta(x)=1$ and $\eta'(x)=2$ the $2$ can flip to a $0$ before the $1$ becomes a $2$, since type $1$ ignores the $\times$ labels.  Intuitively, this makes sense because although type $2$ can spread the infection while type $1$ cannot, type $1$ is not vulnerable to recovery events while type $2$ is.  Of course, it is possible to search for other graphical representations, or even more general types of coupling, to try to show monotonicity.  After a long search, we have found no such coupling, but the reader is encouraged to try!\\

So, lacking monotonicity, we define the following two critical values for the SEIS process; note $\mathbb{P}_{\lambda,\tau}$ denotes the law of the process with parameters $\lambda,\tau$.
\begin{eqnarray*}
\lambda_c^-(\tau) &=& \sup\{\lambda': \mathbb{P}_{\lambda,\tau}(\eta_t \textrm{ dies out }| |\eta_0|<\infty)=1 \textrm{ if }\lambda<\lambda'\}\\
\lambda_c^+(\tau) &=& \inf\{\lambda': \mathbb{P}_{\lambda,\tau}(\eta_t \textrm{ survives }| |\eta_0|>0)>0 \textrm{ if } \lambda>\lambda'\}
\end{eqnarray*}
Clearly, $\lambda_c^-(\tau)\leq\lambda_c^+(\tau)$ for each $\tau$.  The next result gives quantitative estimates on critical values, both for the SEIS process and for the limit process, on $\mathbb{Z}$, i.e., on the graph $G=(V,E)$ with $V=\mathbb{Z}$ and $E= \{xy:|x-y|=1\}$.\\

\begin{theorem}\label{thmquant}
For the SEIS process on $\mathbb{Z}$, $\lambda_c^+(\tau)<6.875$ when $\tau\leq 1/10$, and $\lambda_c^-(\tau)$ has the lower bounds given in Table 1.  For the limit process on $\mathbb{Z}$, $1.944 < \lambda_c^{\infty} < 8.563$.
\end{theorem}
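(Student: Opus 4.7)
The theorem packages four numerical bounds, which I would address by two complementary strategies: block constructions for the survival (upper) bounds, and domination by subcritical auxiliary processes for the extinction (lower) bounds.

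For the upper bounds $\lambda_c^+(\tau) < 6.875$ (when $\tau \leq 1/10$) and $\lambda_c^\infty < 8.563$, I would carry out a block construction in the graphical representation from Section~2. Fix a spatial half-width $L$ and a time horizon $T$, and specify a good event $G_{L,T}$: starting from a single active site at the origin at time $0$, by time $T$ there are active sites in appropriate shifted intervals on both sides of the origin. Using the independence of the substructure over disjoint spacetime boxes noted in Section~2, one couples the set of seeded sites in successive generations to a $1$-dependent oriented site percolation on $\mathbb{Z}\times\mathbb{Z}_+$; by the Liggett--Schonmann--Stacey stochastic domination theorem, this percolation is supercritical whenever $\mathbb{P}(G_{L,T})$ exceeds an explicit threshold. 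The numerical constants then come from optimizing $L,T,\lambda$ so that a workable lower bound on $\mathbb{P}(G_{L,T})$ exceeds this threshold. For $\lambda_c^+(\tau)$, the restriction $\tau \leq 1/10$ is used so that the onset timescale is small compared to $T$ and the block-event probability can be controlled uniformly for $\tau \in [0,1/10]$; essentially, exposed sites produced inside the block almost surely become infectious before $T$.

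For the lower bounds I would dominate active sites by subcritical auxiliary processes. For the tabulated bounds on $\lambda_c^-(\tau)$, couple the SEIS process with a two-type continuous-time branching process: type $1$ (exposed) matures into type $2$ (infectious) at rate $1/\tau$, and type $2$ dies at rate $1$ while producing type-$1$ offspring at total rate $2\lambda$, dropping the exclusion that transmissions to already-active sites have no effect. Subcriticality is governed by the sign of the largest eigenvalue of the $2\times 2$ mean matrix, giving a closed-form condition on $(\lambda,\tau)$ that one can evaluate to populate Table~1. For $\lambda_c^\infty > 1.944$, the analogous one-type branching bound on the limit process gives only $\lambda_c^\infty \geq 1$, since the mean number of offspring per firing is $\mathbb{E}|A| = 2\lambda/(1+\lambda)$; to reach $1.944$ a sharper argument is needed, for instance a contour- or edge-process argument that tracks the leftmost and rightmost occupied sites of an interval-shaped cluster, or a Holley--Liggett-style Lyapunov function on $\{0,1\}^{\mathbb{Z}}$, either of which can exploit the exclusion constraint that pure branching misses.

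The main technical obstacle is the lower bound $\lambda_c^\infty > 1.944$: this roughly doubles the naive branching bound and so must exploit both the negative correlation in the dispersal distribution on $\mathbb{Z}$ (the event $A = \{x-1,x+1\}$ is less likely than two independent transmissions would suggest) and the fact that each site can be occupied at most once. The survival block constructions, while conceptually routine, also require careful numerical bookkeeping to reach the explicit constants $6.875$ and $8.563$, and the uniform extension to $\tau \leq 1/10$ for the SEIS bound must be quantitative in $\tau$ rather than merely a continuity argument from $\tau=0$.
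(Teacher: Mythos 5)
There is a genuine gap, and it is the central issue the paper is organized around: the SEIS process is not monotone, and both of your numerical schemes are applied to it directly as if it were. Your block construction needs to restart from the statement ``some site in the target interval is active at time $(n+1)T$'' without knowing the rest of the configuration or whether the active site is exposed or infectious; for a monotone (additive) process extra activity can only help, but for SEIS extra infection can change an exposed site into an infectious one that recovers too early, so the event probabilities cannot be bounded below uniformly over all ``good'' configurations and the comparison with $1$-dependent percolation breaks down. The paper's device is to sandwich the SEIS process between two monotone processes and do all the numerics on those: the bound $\lambda_c^+(\tau)<6.875$ comes from a block construction (in the style of Ziezold--Grillenberger and the block method you cite) applied to a monotone \emph{lowerbound} process, a two-stage contact process in which exposed sites are also killed by recovery marks; and the restriction $\tau\le 1/10$ is then free of charge because that process is monotone decreasing in $\tau$, not because of any quantitative control of the onset timescale as you propose. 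Similarly, the coupling behind any edge-speed or restricted-window argument ($\eta_t^m\ge\eta_t$, and ``edge drifts to $+\infty$ implies extinction'') needs monotonicity, which is why the paper introduces a monotone \emph{upperbound} process (allowing a site to carry an exposed and an infectious ``line'' simultaneously) before running that machinery.

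The second concrete failure is quantitative: your two-type branching bound cannot produce Table 1. In the SEIS process exposed sites never recover, so every type-$1$ offspring matures with probability one and the mean-matrix criterion for your branching process reduces to $2\lambda<1$, i.e.\ a $\tau$-independent bound $\lambda_c^-(\tau)\ge 1/2$, far below the tabulated values $1.13$--$1.57$ and unable to reproduce their dependence on $\tau$. The edge-process idea you float only for $\lambda_c^\infty>1.944$ is in fact the method the paper uses for \emph{all} the lower bounds: restrict the (monotone upperbound, resp.\ limit) process started from a half-line to an $(m+1)$-site window at the left edge, compute the invariant measure of the embedded jump chain, and show the expected increment of the edge is positive, with $m=3$ for Table 1 and $m=8$ for the limit process; you would need to commit to and carry out that computation (including the coupling argument that positive edge speed implies extinction from finite configurations) rather than leave it as one of several possible sharpenings. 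Your upper bound for $\lambda_c^\infty$ is the one place where your plan essentially matches the paper, since the limit process is additive and the block construction applies to it directly.
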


\begin{table}
\caption{Lower bounds on $\lambda_c^-(\tau)$}
\centering{\begin{tabular}{r | r r r r r r r r r r } $\tau$ & $10^4$ & $10^3$ & $100$ & $10$ & $1$ & $0.58$ & $1/10$ & $1/100$ & $10^{-3}$ & $10^{-4}$ \\ \hline $\lambda_c^-(\tau)>$  & $1.57$ & $1.57$ & $1.56$ & $1.45$ & $1.15$ & $1.13$ & $1.24$ & $1.32$ & $1.34$ & $1.34$ \\
\end{tabular}}
\end{table}

\hspace{1in}

Lower bounds on $\lambda^-$ are obtained using the method of \cite{ziezold} applied to a monotone process that upperbounds the SEIS process, and the upper bound on $\lambda^+$ for small $\tau$ is obtained with the method of \cite{block} applied to a monotone process that lowerbounds the SEIS process.  In both cases the estimates are achieved with the assistance of a computer and are rigorous up to the rounding error on computations.  Unfortunately, in this case each lower bound on $\lambda^-$ is computed for a single value of $\tau$; it is possible to make guesses by interpolating, but these are not a priori rigorous.  Note also that the lower bounds suggest, but again do not prove, that the critical value of the upperbound process has a unique minimum near $\tau=0.58$ and is otherwise increasing/decreasing.  Numerical simulations of the SEIS process on $\mathbb{Z}$ suggest that $\lambda_c^-(\tau) = \lambda_c^+(\tau)$ and that this value increases monotonically from about $1.6$ at $\tau=0$ to about $2.4$ as $\tau\rightarrow\infty$.\\

For the limit process, the lower bound is obtained using the method of \cite{ziezold} and the upper bound, using the method of \cite{block}.  Note that for the contact process, $1.539 \leq \lambda_c \leq 1.942$ (lower bound from \cite{ziezold} and upper bound from \cite{impr}), and from the upper bound together with our estimate we note that the strict inequality $\lambda_c^{\infty}>\lambda_c^0$ holds.\\

Using different methods, we obtain some ``qualitative'' estimates on critical values.
\begin{theorem}\label{thmqual}
For the SEIS process on $\mathbb{Z}$,
\begin{itemize}
\item there exists $\lambda_0<\infty$ such that $\lambda_c^+(\tau)<\lambda_0$ for all $\tau$,
\item $\lambda^+(\tau),\lambda^-(\tau) \rightarrow \lambda_c^0$ as $\tau\rightarrow 0$ and
\item $\lambda^+(\tau),\lambda^-(\tau) \rightarrow \lambda_c^{\infty}$ as $\tau\rightarrow\infty$.
\end{itemize}
\end{theorem}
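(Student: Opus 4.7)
I would prove the three items separately, though with a common theme: reduce the statement about $\lambda_c^\pm$ to a quantitative estimate on a finite space-time window, then propagate to infinite-time survival or extinction by comparison with supercritical or subcritical oriented percolation.

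For the uniform upper bound $\lambda_c^+(\tau) < \lambda_0$ in item (i), I would use a Bezuidenhout-Grimmett-style block construction with spatial blocks of fixed length $L$ and time blocks of length $T(\tau) = CL(1+\tau)$ scaled to $\tau$. The key estimate: starting from a single infectious site at $0$, with probability $\geq 1 - \varepsilon$ uniformly in $\tau$, site $L$ (and by symmetry $-L$) becomes type $2$ at some time $\leq T(\tau)$. To see this, define events $E_k = \{\text{site } k \text{ first becomes type } 2 \text{ at some time } t_k \leq T(\tau)\}$ for $k = 0, 1, \ldots, L$. By the strong Markov property applied at successive $t_k$, each step $E_k \to E_{k+1}$ (i) succeeds with probability $\lambda/(1+\lambda)$---site $k$ must transmit to the then-vacant $k+1$ before recovering, after which $k+1$ has no direct transition back to type $0$---and (ii) has duration stochastically dominated by the sum of independent $\mathrm{Exp}(\lambda+1)$ and $\mathrm{Exp}(1/\tau)$ variables (transmission at $k$, then onset at $k+1$). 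Fix $L$ large enough for the percolation step to be supercritical at failure probability $\varepsilon$, take $\lambda$ large enough that $(\lambda/(1+\lambda))^L \geq 1 - \varepsilon/2$, and $C$ large enough that the sum of $L$ such durations is $\leq T(\tau)$ with probability $\geq 1 - \varepsilon/2$. This gives a block-to-block success probability $\geq 1 - \varepsilon$ uniformly in $\tau$, hence $\lambda_c^+(\tau) \leq \lambda_0$ for all $\tau$.

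For items (ii) and (iii), I would prove the one-sided bounds $\limsup_{\tau \to \ast} \lambda_c^+(\tau) \leq \lambda_c^\ast$ and $\liminf_{\tau \to \ast} \lambda_c^-(\tau) \geq \lambda_c^\ast$ for $\ast \in \{0, \infty\}$, which together with $\lambda_c^-(\tau) \leq \lambda_c^+(\tau)$ squeeze both critical values to $\lambda_c^\ast$. The ``limit'' process is the contact process for $\ast = 0$ and $\zeta_t$ for $\ast = \infty$, working in rescaled time $t \mapsto t/\tau$ in the latter case, under which survival is preserved. A coupling on bounded space-time windows is available in each regime: for $\tau \to 0$, the SEIS process converges to the CP on such windows, since onset becomes instantaneous and via the common graphical representation for transmissions and recoveries the SEIS configuration coincides with the CP configuration outside a vanishing set of times; for $\tau \to \infty$, the coupling is Theorem~\ref{thmlimit}. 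In each case: for $\lambda > \lambda_c^\ast$, the supercritical block construction for the limit---a finite space-time event---transfers through the coupling to SEIS, giving survival. For $\lambda < \lambda_c^\ast$, a finite-region extinction estimate for the limit iterates to infinite-time extinction, and transfers analogously to SEIS.

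The main obstacle is the subcritical side of items (ii) and (iii), since the couplings only provide finite-time information whereas ``dying out'' is an infinite-time event. The remedy is a finite-region extinction estimate of the form: for $\lambda < \lambda_c^\ast$, there exist $M, T, \rho < 1$ such that the limit process starting from any configuration supported in $[-M,M]$ has no active site in $[-2M,2M]$ at time $T$ with probability $\geq 1 - \rho$. Such an estimate is classical for the subcritical CP, and once iterated using the Markov property together with the independence of labels in disjoint space-time regions, it yields geometric decay of survival probability, hence extinction. Transferring through the coupling gives the same estimate for SEIS at small (respectively large) $\tau$. Establishing the estimate for $\zeta_t$ requires some care, however, because its graphical representation involves dispersal distributions $p(x, \cdot)$ with potentially unbounded support, so the tail probability of escape from the confining region $[-2M, 2M]$ in a single dispersal event must be explicitly controlled as part of the bound.
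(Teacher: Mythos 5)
Your plan for the uniform bound in item (i) has a genuine gap at the point where the single-seed estimate is promoted to a block-to-block estimate. The event ``site $L$ becomes type $2$ by time $T(\tau)$'' is a function of the whole configuration, not of the labels in the block, and your per-step analysis assumes the site ahead of the front is vacant and undisturbed. In the iterated construction the base of a block carries whatever configuration the process delivers (exposed sites, several active sites, influence from neighbouring blocks), and since the SEIS process is not monotone, extra activity can destroy your chosen chain: a site ahead of the front that was activated earlier may already have turned infectious and recovered by the time your designated transmission label arrives, so that label is wasted and the front is cut. What is needed (and what the paper supplies) are events defined purely in terms of the substructure on each block --- the events $E_1,E_2$: after every onset label, transmission labels occur on all edges of the block before any recovery label there, within time $h$, together with suitably placed onset labels --- which force propagation for \emph{every} active base configuration and are independent across disjoint blocks; the rescaling making onset rate $1$, recovery rate $\tau$ and transmission rate $\lambda\tau$, combined with the bound of Theorem \ref{thmquant} for $\tau\le 1/10$, is what makes this uniform in $\tau$. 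The same issue resurfaces in the supercritical halves of items (ii) and (iii): ``the block construction for the limit transfers through the coupling, giving survival'' is not yet a proof, because survival is an infinite-horizon event and the limit-process block events must be converted into active paths for the SEIS process itself that concatenate across blocks despite the lack of monotonicity. The paper does this for $\tau\to0$ by passing to the monotone two-stage (lowerbound) process, to which the comparison of \cite{supercrit} applies directly and which is dominated by the SEIS process, and for $\tau\to\infty$ by the machinery of potentially active paths, onset-ordered regions, good bases and concatenation (Lemmas \ref{potact} and \ref{concat}); some such device is unavoidable in your approach as well.

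On the subcritical halves of (ii) and (iii) your route is genuinely different from the paper's, but as sketched it does not close. First, your box estimate only forbids active sites in $[-2M,2M]$ at time $T$, so the infection can survive by escaping, and on the failure event you have no control of the configuration; to iterate you need containment in an outer box plus control of what happens upon failure. Second, and more seriously, the transfer to SEIS is only through couplings on \emph{fixed finite} windows: Theorem \ref{thmlimit} requires an initial state with no infectious sites and finitely many exposed sites, and for fixed large $\tau$ its failure probability grows with the size of the window and of the active region, so iterating over the (necessarily growing) windows needed to contain a possibly surviving process does not give geometric decay of the survival probability. Third, the quantitative subcritical box estimate for the limit process is not ``classical'' and must itself be proved (doable via monotonicity and almost sure extinction, but it is part of the work, as you note for the unbounded dispersal range). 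The paper avoids all of this by a different mechanism: the SEIS process is dominated by the monotone upperbound process, the edge-speed method of \cite{ziezold} gives $\lambda_m(\tau)\le\overline{\lambda}_c(\tau)\le\lambda_c^-(\tau)$ with $\lambda_m\uparrow\lambda_c^{0}$ (respectively $\lambda_c^{\infty}$, after establishing the edge-speed characterization of $\lambda_c^{\infty}$ via the coupling property of Lemma \ref{limcomp}), and the limits $\tau\to0,\infty$ are then handled by continuity in $\tau$ of the finitely many jump-chain transition probabilities and of the invariant measure of the embedded chain. If you keep your scheme you must at least supply (a) the containment-and-extinction estimate with an outer box, (b) a device removing infectious sites at the start of each window (as the paper does with the slabs $P_{m,n}$), and (c) an argument replacing the naive iteration; the edge-speed/continuity route is the cleaner fix.
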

Here we show only that $\lambda_0<\infty$ exists, as it appears difficult to get any sort of realistic estimate.  The proof uses the block construction idea of \cite{block} with a bit of extra work to get around the lack of monotonicity.  Convergence of $\lambda^+,\lambda^-$ as $\tau\rightarrow 0$ is proved with the help of the results of \cite{supercrit} and \cite{ziezold}, in both cases by passing to a sequence of finite systems and using continuity with respect to parameters.  Convergence of $\lambda^+,\lambda^-$ as $\tau\rightarrow \infty$ is proved in the same way, with a couple of technical points that first need to be proved for the limit process.\\  

The paper is laid out as follows.  In Section \ref{seclimit} we prove Theorem \ref{thmlimit}.  In Section \ref{secquant} we prove Theorem \ref{thmquant}.  In Section \ref{secqual} we prove Theorem \ref{thmqual}.

\section{Theorem \ref{thmlimit}: Convergence to the Limit Process}\label{seclimit}
Here we prove Theorem \ref{thmlimit}.  We begin with a useful lemma.  Using the graphical representation given in Section \ref{secintro}, construct the SEIS process $\eta_t$ rescaled by $t\mapsto t/\tau$, so that onset occurs at rate $1$, recovery at rate $\tau$ and transmission at rate $\lambda\tau$.  Recall that $\mathcal{S} = G\times[0,\infty)$ denotes the spacetime set.
\begin{lemma}\label{onset}
Let $G=(V,E)$ be a finite graph.  In the rescaled SEIS process, for each $T>0$ with probability tending to $1$ as $\tau\rightarrow\infty$, for each onset label $\star$ at a point $(x,t) \in \mathcal{S}$ there is a $t'>t$ and a recovery label $\times$ at $(x,t')$ such that there are no onset labels in $V\times (t,t']$.
\end{lemma}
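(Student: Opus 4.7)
My plan is a first-moment bound on the number of onset labels in $V\times[0,T]$ that violate the property, combined with Markov's inequality.  Since $G$ is finite, the superposition of the onset PPPs on $V$ has total intensity $|V|$, so the count of onset labels in $V\times[0,T]$ is Poisson with mean $|V|T$.

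Fix an onset label at $(x,t)$ and let $R$ denote the waiting time to the next recovery label at $x$ after $t$, and $W$ the waiting time to the next onset label anywhere in $V$ after $t$.  By the disjoint-fibers independence of the substructure recorded in Section \ref{secintro}, the recovery PPP at $x$ is independent of every onset PPP, so the memoryless property gives $R\sim\mathrm{Exp}(\tau)$, independent of $W$; and the strong Markov property applied to the superposed onset PPP at time $t$ gives $W\sim\mathrm{Exp}(|V|)$.  The property fails at this onset exactly when $R>W$, which occurs with probability $|V|/(|V|+\tau)$.  Enumerating the onset times $t_1<t_2<\cdots$ in $V\times[0,T]$, applying this observation at each $t_i$, and summing by linearity of expectation gives
\[
\mathbb{E}\bigl[\#\{\text{failing onsets in }V\times[0,T]\}\bigr] \;=\; |V|T\cdot\frac{|V|}{|V|+\tau} \;=\; \frac{|V|^2 T}{|V|+\tau},
\]
which tends to $0$ as $\tau\to\infty$.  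Markov's inequality then finishes the proof.

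The only step that needs care is the enumeration in the second paragraph: strictly speaking one should invoke the strong Markov property at each onset time $t_i$ (or equivalently appeal to Slivnyak's theorem for the superposed onset PPP on $V$) to justify that the $R_i$ and the gaps $t_{i+1}-t_i$ are independent with the claimed exponential distributions.  Given the Poisson independence properties already noted for the substructure, this is entirely routine, and I do not anticipate any substantive obstacle.
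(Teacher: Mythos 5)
Your proof is correct, and it rests on the same kernel computation as the paper's: after any onset, the next recovery at that site beats the next onset anywhere in $V$ with probability $\tau/(|V|+\tau)$, by competing independent exponentials of rates $\tau$ and $|V|$. Where you differ is in how this is aggregated. The paper argues sequentially: by the strong Markov property the first $N$ onsets are all discrepancy-free with probability exactly $[\tau/(|V|+\tau)]^N$, and a separate tail estimate $\mathbb{P}(t_{N+1}>T)\uparrow 1$ handles the random number of onsets before time $T$, so the limit is taken in two steps (first $N$ large, then $\tau$ large). You instead bound the expected number of failing onsets in $V\times[0,T]$, obtaining $|V|^2T/(|V|+\tau)$ exactly (justified either by Mecke/Slivnyak or, more elementarily, by the Wald-type identity $\sum_i \mathbb{P}(t_i\leq T)\cdot |V|/(|V|+\tau)$ with the strong Markov property applied at each onset time $t_i$, since $\{t_i\leq T\}$ is measurable with respect to the past at time $t_i$), and finish with Markov's inequality. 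Your route buys an explicit quantitative rate $O(1/\tau)$ and avoids the truncation at $N$; the paper's route uses nothing beyond independence of successive competing exponentials, so it is marginally more elementary. Two small points to tidy: the independence of the recovery and onset labels at the same site $x$ comes from the fact that all p.p.p.'s in the substructure are constructed independently, not from ``disjoint fibers'' (both processes live on the same fiber $\{x\}\times[0,\infty)$); and, like the paper's own proof, you are implicitly reading the lemma as a statement about onset labels occurring by time $T$, which is the form in which it is applied in the proof of Theorem \ref{thmlimit}.
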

\begin{proof}
Let $\{(x_i,t_i):i=1,2,...\}$ be the set of points $(x,t) \in \mathcal{S}$ such that there is a $\star$ label at $(x,t)$, with $t_1<t_2<...$; since the total intensity of $\star$ labels is finite, with probability 1 the times can be ordered in this way.  Say a \emph{discrepancy} occurs at time $t_{i+1}$ if in the interval $\{x_i\}\times(t_i,t_{i+1})$ there are no $\times$ labels, then the desired event holds if the first discrepancy occurs after time $T$.  The intensity of $\times$ labels at each site is $\tau$ and the intensity of $\star$ labels is $|V|$, so for each $N$, with probability $[\tau/(|V|+\tau)]^N$ which $\uparrow 1$ as $\tau\rightarrow \infty$, there are no discrepancies up to time $t_{N+1}$.  Since $\mathbb{P}(t_{N+1}>T)\uparrow 1$ as $N\rightarrow\infty$, the result follows.
\end{proof}

\begin{proof}[Proof of Theorem \ref{thmlimit}]
We prove the result when $G$ is a finite graph.  The result for infinite graphs is implied by the following fact that can be seen from the construction of the process.  Fix $T>0$ and let $\eta_0$ be an initial condition for the SEIS process with no infectious site and finitely many exposed sites.  Define the graph distance $\dist(x,y)$ to be the least number of edges in any path between $x$ and $y$ with $\dist(x,y)=\infty$ if there is no path from $x$ to $y$.  For $k\geq 0$ let $G_k$ be the graph induced by the set of vertices $y\in V$ such that $\dist(x,y)<k$ for some $x$ such that $\eta_0(x)=1$ and let $\mathcal{S}_k = G_k \times [0,\infty)$, then define $\eta_t^k$ for $0 < t \leq T$ using the restriction of the substructure to $\mathcal{S}_k$.  Then, $\eta_t^k = \eta_t$ for $0 <t \leq T$ with probability tending to $1$ as $k\rightarrow\infty$.\\

First use the graphical representation to build two independent substructures $U^{(1)}_t$ and $U^{(2)}_t$ with respective filtrations $\mathcal{F}^{(1)}(t)$ and $\mathcal{F}^{(2)}(t)$, one as for the rescaled SEIS process and another as for the limit process.  Construct the rescaled SEIS process $\eta_t$ from $\eta_0$ using $U^{(1)}_t$ and let $\{(x_i,t_i);i=1,...,m\}$ with $t_1<t_2<...<t_m$ denote the points $(x,t)$ at which $\eta_t(x)$ goes from $1$ to $2$.  For $i=1,...,m$ let $s_i = \min\{t>t_i:\eta_t(x)=0\}$ be the first recovery time of $x_i$ after $t_i$.  Say that a \emph{discrepancy} occurs if $t_{i+1}<s_i$ for some $i \in \{1,...,m-1\}$.  So long as no discrepancy has occurred we will use $U^{(1)}_t$ to construct the limit process $\zeta_t$; $U^{(2)}_t$ will help to construct $\zeta_t$ in the event of a discrepancy.\\

For $i=1,...,m-1$ define the stopping times $r_i = \max s_i,t_{i+1}$, and let $\zeta_t=\eta_t$ for $t \in [0,t_1]$.  For $t \in [0,t_1]$ let $\zeta_t=\eta_t$.  Then, working inductively, suppose $\eta_t$ is determined for $t \in [0,r_{j-1}]$, is measurable with respect to $\mathcal{F}^{(1)}(r_{j-1})$, and no discrepancy has occurred up to time $r_{j-1}$ i.e., $r_i=t_{i+1}$ for $i=1,...,j-1$.  To determine $\zeta_t$ for $t \in [r_{j-1},t_{j+1}]$, use the $\leftrightarrow$ labels in $\{x_j\cdot\}\times (t_j,s_j)$ and the $\times$ label at $(x_j,s_j)$ to obtain the propagation distribution at $(x_j,t_j)$ and use the $\star$ label at $(x_{j+1},t_{j+1})$ to obtain the next onset transition; note these transitions depend only on $\zeta_{r_{j-1}}$ and $\mathbb{E}(U_{t+r_{j-1}}|\mathcal{F}(r_{j-1}))$ so have the correct distribution, and are measurable with respect to $\mathcal{F}(r_j)$.  If $r_j=t_{j+1}$ then $\zeta_t$ is determined for $t \in [0,r_j]$ and no discrepancy has occurred up to time $r_j$.  If $r_j=s_j$ a discrepancy has occurred; in this case, use the second substructure to determine $\zeta_t$ for $t \in [t_{j+1},T]$, noting that the second substructure is independent of the first.  Proceeding in this way determines $\zeta_t$ for $t \in [0,T]$, and by Lemma \ref{onset}, with probability tending to $1$ as $\tau\rightarrow\infty$, there are no discrepancies in the time interval $[0,T]$ and so $\eta_t=\zeta_t$, $0 \leq t \leq T$.  It is left to the reader to show that for $S = \{t:\eta_t(x)=2 \textrm{ for some }x\}$, $\ell(S\cap [0,T])\rightarrow 0$ with probability tending to $1$ as $\tau\rightarrow\infty$; to do so it suffices to prove a slight refinement of Lemma \ref{onset}.
\end{proof}

\section{Theorem \ref{thmquant}: Quantitative Estimates}\label{secquant}
In this section we prove Theorem \ref{thmquant}, in three parts:  estimate of $\lambda^+$, estimates of $\lambda^-$, estimate of $\lambda_c^{\infty}$.  First we define the lowerbound and upperbound processes, which we denote $\underline{\eta_t}$ and $\overline{\eta_t}$.  The idea is to modify some transitions in the SEIS process so that we end up with a monotone process that either lowerbounds or upperbounds the original process.  The definition is only relevant for $\tau>0$, since if $\tau=0$, in both cases it coincides with the contact process.\\

\subsection{Lowerbound Process}\label{seclb}
Starting from the graphical representation for the SEIS process, to obtain $\underline{\eta_t}$, construct the process as if it was the SEIS process, except that whenever an exposed site sees a recovery label $\times$, it becomes healthy.  As it turns out, this gives a particular case of what is called the two-stage contact process \cite{krone}, \cite{fox}, which is known to be monotone, as well as monotone increasing in $\lambda$ and monotone decreasing in $\tau$, with respect to the partial order on configurations induced by the order $0<1<2$ on types; given what was shown for the contact process in Section \ref{secintro}, this is not hard to check.  Intuitively, the reason why the stated monotonicity holds is because now the exposed type is in every sense weaker, in its ability to spread the infection, than the infectious type.  Monotonicity in $\lambda$ is intuitively clear, and monotonicity in $\tau$ can be explained by saying that the longer an exposed site has to wait to become infectious, the less it will spread the infection.  This gives the existence of a critical value $\underline{\lambda_c}(\tau)$ that is non-decreasing in $\tau$.  It is also not hard to check that $\underline{\eta_0}$ is a genuine lower bound, that is, if $\underline{\eta_0}\leq \eta_0$ then $\underline{\eta_t}\leq \eta_t$ for $t>0$, with respect to the partial order just described.  This implies in particular that $\underline{\lambda_c}(\tau)\geq\lambda^+(\tau)$ for each $\tau$.  As shown in \cite{fox}, for a graph of bounded degree, $\underline{\lambda_c}(\tau)\rightarrow \infty$ at a finite value of $\tau$, so this upper bound is only useful for small values of $\tau$.\\

\subsection{Upperbound Proces}\label{secub}
To get $\overline{\eta_t}$ we first picture $\eta_t$ as follows.  Recall that $\mathcal{S} = G \times [0,\infty)$ is the spacetime set, which we picture as a copy of $G$ extruded upward in the increasing time direction.  Given $\eta_0$, and determining the process on each realization for all time, if $\eta_t(x)=1$ for $t \in [t_1,t_2)$, draw a thick dashed line on the line segment $\{x\}\times[t_1,t_2)$ in $\mathcal{S}$, and if $\eta_t(x)=2$ draw a thick solid line; if $\eta_t(x)=0$ leave it blank.  If the infection is transmitted along an edge $e \in E$ then draw a thick solid line with an arrow pointing in the direction it was transmitted.\\

Now, modify the graphical representation so that transmission labels are directed.  That is, for $xy \in E$, with intensity $\lambda$ place transmission labels $\rightarrow$ from $x$ to $y$ and with intensity $\lambda$, place independent transmission labels $\leftarrow$ from $y$ to $x$.  Clearly, this does not change the law of $\eta_t$.  Then, to define $\overline{\eta_t}$ we simply allow both a dashed line \emph{and} a solid line to exist at the same site, at the same time; that is, if both $x$ and $y$ are infectious and there is a transmission label from $x$ to $y$, then $y$ becomes ``both'' infectious and exposed; with respect to the above visualization, along $y$ there is both a dashed line and a solid line, each behaving as it would in the SEIS process.  This is why we make labels directed; if both $x$ and $y$ have a solid line and a $\leftrightarrow$ appears along edge $xy$ there is no way to tell which of the two sites $x,y$ will receive a dashed line.\\

So, if $y$ has both a dashed and solid line and the next event is
\begin{itemize}
\item an onset label $\star$, the dashed line coalesces with the solid line, and only a solid line remains, and if it is
\item a recovery label $\times$, the solid line is knocked out and only the dashed line persists
\end{itemize}
Then, as in the SEIS process, only a solid line is able to use the transmission labels.  Also, at most one line of each type is allowed at a single site, so if $x$ already has a dashed line and there is a transmission event to $x$, it still has only one dashed line.  In order to refer to it, we denote by type 3 the presence of both a dashed and solid line.  It is not hard to check that $\overline{\eta_t}$ is monotone, and is monotone increasing in $\lambda$, with respect to the partial order on configurations induced by the order $0<1,2<3$ on types; it is not, however, monotone in $\tau$, effectively because in this process, as in the SEIS process, types $1$ and $2$ are not comparable.  Thus for each $\tau$, $\overline{\eta_t}$ has a critical value $\overline{\lambda}_c(\tau)$ whose variation in $\tau$ is not known a priori.  Clearly, $\overline{\eta_t}$ is an upper bound for $\eta_t$ in the sense that $\overline{\eta_0}\geq \eta_0$ implies $\overline{\eta_t}\geq\eta_t$ for $t>0$, with respect to the partial order just described, which implies that $\overline{\lambda}_c(\tau) \leq \lambda^-(\tau)$, for each $\tau$.\\

\subsection{Some Definitions}
We introduce a couple of definitions that will be useful in this section and the next section.  Notice that, given a finite state space $S$ and transition rates $q_{ij}$ for $i,j \in S$, we can construct a continuous time Markov chain on $S$ as follows.  Given that $X_t=i$, then corresponding to the set of $k$ such that $q_{ik}>0$ we have an independent collection of exponential random variables $s_k$ with rate $q_{ik}$, and setting $s=\min s_k$ and $j = \argmin s_k$, we let $X_{t+s} = j$.  Note this is the same Markov chain as the one in which each $i$ to $j$ transition is generated by a Poisson point process with intensity $q_{ij}$.  It can be checked by a calculation that $p_{ij} = \mathbb{P}(X_{t+s}=j|X_t=i) = q_{ij}/q_i$ if $q_i\neq 0$, where $q_i = \sum_{k \neq i}q_{ik}$; if $q_i=0$ the chain remains at $i$ once it has arrived there.\\
\begin{definition}
The discrete time Markov chain on $S$ determined by the transition probabilities $p_{ij}$ defined above is called the \emph{embedded jump chain}.
\end{definition}
Note that the embedded jump chain encodes the changes of state in the continuous time chain.  In fact, the continuous time chain can be reconstructed from the embedded jump chain by waiting an independent exponential time of rate $q_i$ at each state $i$ before making a transition; see \cite{norris} for details.\\

Our next task is to define a notion of path for the infection in spacetime.
\begin{definition}\label{defpath}
A \emph{path} in spacetime is a list of alternating vertical and horizontal line segments
\begin{equation*}
(v_1,h_1,...,v_{m-1},h_{m-1},v_m)
\end{equation*}
in $\mathcal{S}$ with each $v_i = \{x_i\}\times(t_{i-1},t_i)$, $t_{i-1}<t_i$ and each $h_i = \{x_ix_{i+1}\}\times \{t_i\}$.  The \emph{base} of a path is the point $(x_1,t_0)$, and the \emph{end} is the point $(x_m,t_m)$.\\
\end{definition}
The notion of active path is here defined only for the contact, lowerbound, and limit processes.  Although it is easy to define for the upperbound process we will have no need for it, and for the SEIS process the definition will be a bit different.
\begin{definition}\label{defactive}
For the contact process, a path is \emph{active} if for $i=1,...,m-1$ there is a $\leftrightarrow$ label at $h_i$ and for $i=1,..,m$ there are no $\times$ labels on $v_i$.  For the lowerbound process, a path is active if in addition, for $i=1,...,m-1$ there is a $\star$ label on $v_i$.  For the limit process, a path is active if instead of a $\leftrightarrow$ label at $h_i$, $x_{i+1}$ belongs to the offspring of $(x_i,t_i)$.
\end{definition}

Say a point $(x,t)$ is \emph{active} for a process $\eta_t$ if $\eta_t(x)\neq 0$.  It is easy to check for the contact process, the lowerbound process and the limit process that if $(x,t)$ is active and there is an active path with base $(x,t)$ and end $(y,s)$ then $(y,s)$ is active; one way to express this is that active paths take active points to active points.  It is also true that $(y,s)$ is active if and only if there is an active point $(x,0)$ and an active path with base $(x,0)$ and end $(y,s)$.  This implies a useful property called \emph{additivity} which is discussed in the proof of Lemma \ref{limcomp} in Section \ref{secconcat}.

\subsection{Estimate of $\lambda^+$}
We will use the method described in \cite{block}, applied to the lowerbound process on $\mathbb{Z}$, to obtain upper bounds on $\underline{\lambda_c}(\tau)$.  Our first task is to describe the method and to justify its usage in this setting.  Recall that $\mathcal{S}=G\times[0,\infty)$ is the spacetime set, for $R \subset \mathcal{S}$, $\mathcal{F}(R)$ is the $\sigma$-algebra generated by the restriction of the substructure of $R$.  In what follows, we say an event holds ``on $R$'' if it is $\mathcal{F}(R)$-measurable.\\

Letting $L:=\{(m,n)\in\mathbb{Z}^2:n\geq 0, m+n\textrm{ is even}\}$, define \emph{oriented site percolation} on $L$ to be the process in which sites in $L$ are independently \emph{open} with probability $p$ and \emph{closed} with probability $1-p$, and there is a \emph{path} from $(k,l)$ to $(m,n)$ or $(k,l)\rightarrow(m,n)$ if there is a list $(k,l)=(k_1,l_1),(k_2,l_2),...,(k_j,l_j)=(m,n)$ such that $l_{i+1}=l_i+1$ and $k_{i+1}=k_i\pm 1$ for $i=1,...,j-1$, and $(k_i,l_i)$ is open for $i=1,...,j-1$; note the last site is not required to be open.  The \emph{cluster} of $(m,n)$ is the set $C(m,n) = \{(k,l) \in L:(m,n)\rightarrow (k,l)\}$.  We say that \emph{percolation occurs} from $(m,n)$ if $|C(m,n)|=\infty$.  If $p<1$ is close enough to $1$, then percolation occurs from $(0,0)$ with positive probability; as shown in \cite{block}, $p\geq 0.819$ suffices.\\

The idea of the method of \cite{block} is to use the above result for oriented percolation to prove survival of the infection in the process of interest.  In our case the discussion applies to the lowerbound process, the contact process, the limit process, and any process whose active paths take active points to active points as described above.  Embed the spacetime set $(\mathbb{Z},\{xy:|x-y|=1\})\times[0,\infty)$ into the half-space $\{(x,y)\in\mathbb{R}^2:y\geq 0\}$ then draw the rectangles $\{R_{m,n}:(m,n) \in L\}$ defined by
\begin{equation*}
R_{m,n} = R_{0,0} + (mK,nT)
\end{equation*}
for some $K,T$ to be determined, where $R_{0,0} = [0,J]\times[0,T]$ for some integer $K\leq J<2K$ and for a set $S$ and a point $r$, $S+r:=\{s+r:s \in S\}$; the range of $J$ ensures that $R_{m,n}$ does not intersect $R_{m-2,n}$ or $R_{m+2,n}$, but does intersect $R_{m-1,n+1}$ and $R_{m+1,n+1}$ along its top edge; let $j$ denote the number of sites along which $R_{m,n}$ intersects $R_{m-1,n+1}$, which is also the number of sites along which $R_{m,n}$ intersects $R_{m+1,n+1}$.\\

Fix a parameter $i \in \{1,...,j\}$.  Say that a configuration $\eta$ is good for $R_{m,n}$ if among either the leftmost or rightmost $j$ sites in $R_{m,n}$ there are at least $i$ distinct sites $x_1,...,x_i$ such that $\eta(x_k)\neq 0$ for $k=1,...,i$.  For $(m,n) \in L$, and for $\eta$ that is good for $R_{m,n}$, define $A_{m,n}(\eta)$ as follows:  $A_{m,n}(\eta)$ occurs if for at least $i$ of the $j$ leftmost sites $y_1,...,y_i$ and at least $i$ of the $j$ rightmost sites $y_{i+1},...,y_{2i}$ in $R_{m,n}$ at time $(n+1)T$, for $p=1,...,2i$ there is an $q \in \{1,...,i\}$ such that there is an active path lying in $R_{m,n}$ with base $(x_q,nT)$ and end $(y_p,(n+1)T)$.  See Figure \ref{block-fig} for a picture.\\

\begin{figure}
\centering{\includegraphics[width=130mm,height=80mm]{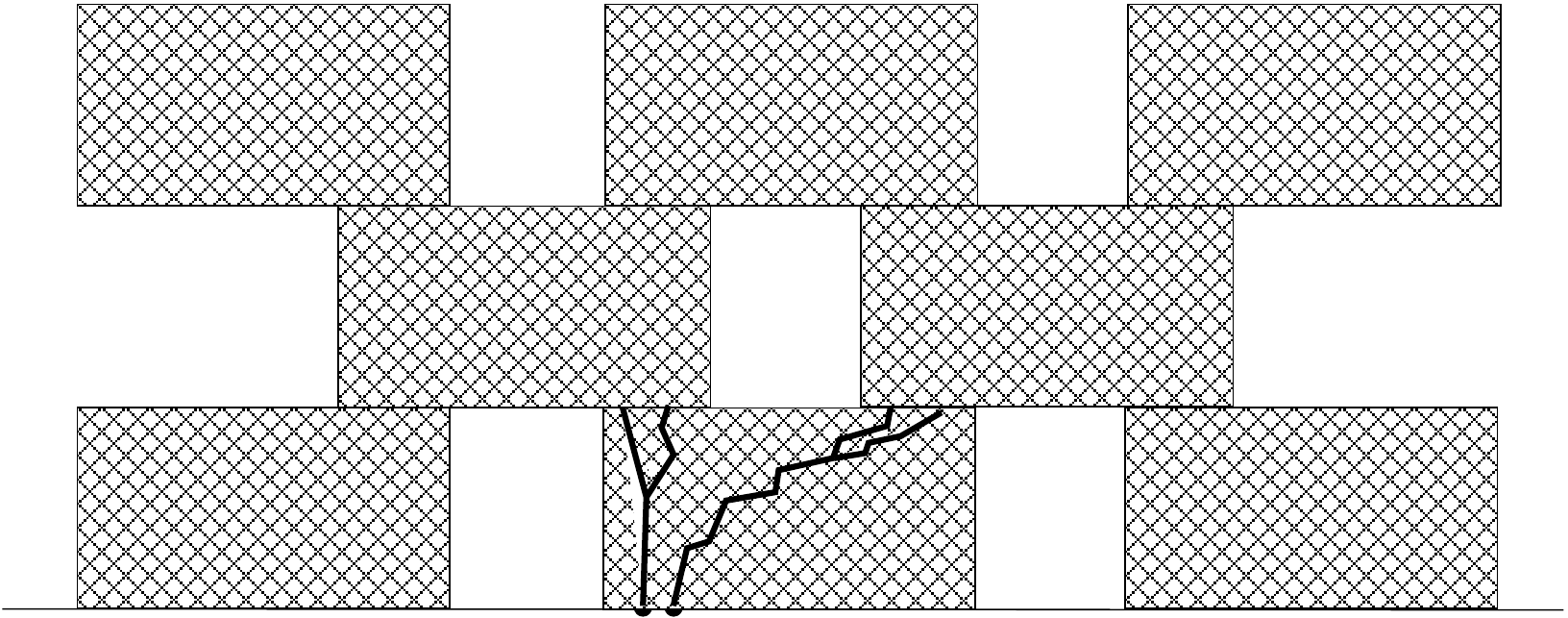}}
\caption{Depiction of rectangles $R_{m,n}$ for $-2 \leq m \leq 2$ and $0 \leq n \leq 2$, as well as the event $A_{0,0}(\eta_0)$ in $R_{0,0}$, with $i=2$.}
\label{block-fig}
\end{figure}

If $\eta_{nT}$ is good for $R_{m,n}$ and $A_{m,n}(\eta_{nT})$ occurs, then $\eta_{(n+1)T}$ is good for both $R_{m-1,n+1}$ and $R_{m+1,n+1}$.  Moreover, given $\eta_{nT}$ that is good for $R_{m_1,n},R_{m_2,n},...$, the events $A_{m_1,n}(\eta_{nT}),A_{m_2,n}(\eta_{nT}),...$ are independent.  It is then straightforward to show that if for each $(m,n) \in L$, $\mathbb{P}(A_{m,n})\geq p$ for every configuration that is good for $R_{m,n}$ then if $\eta_0$ is good for $R_{0,0}$, the set $\{(m,n)\in L:\textrm{ there is an active point for }\eta_t \textrm{ in }R_{m,n}\}$ stochastically dominates the cluster $C(0,0)$ of oriented site percolation with parameter $p$.  Moreover, taking $\eta$ at time $0$ that is good for $R_{0,0}$ together with the set of labellings of $R_{0,0}$ belonging to the event $A_{0,0}$ given $\eta$ and translating both by $(mK,nT)$ gives $\eta'$ that is good for $R_{m,n}$ together with the set of labellings belonging to $A_{m,n}$ given $\eta'$.  Thus, by translation invariance of the law of the substructure, to show the lowerbound process survives with positive probability starting from $\eta_0$ that is good for $R_{0,0}$, it suffices to show that $\mathbb{P}(A_{0,0})\geq 0.819$ for each $\eta_0$ that is good for $R_{0,0}$.\\

Using blocks with $J=7$ and $K=6$ and setting $T$ so that there are an average of 650 labels in $R$ (650 just chosen to match the choice in \cite{block}), following \cite{block} we estimate numerically the transition matrix $P$ for the embedded jump chain corresponding to the lowerbound process restricted to the sites in $R$, counting redundant transitions (i.e. points in the Poisson process that have no effect) so that the rate of transitions is fixed and is equal to the total intensity of p.p.p.'s, that we denote $\gamma$, and then by computing the first couple of thousand terms in the sum
\begin{equation*}
\sum_{i=1}^{\infty}e^{-\gamma T}\frac{\gamma^i}{i!}P^i
\end{equation*}
by monotonicity we obtain a lower bound on the entries of the transition matrix at time $T$ in the continuous time chain.  If $\lambda=6.875$ and $\tau=1/10$, then letting $i=1$, with respect to the estimated transition matrix we find that for each configuration favourable for $R_{0,0}$, $A_{0,0}$ has probability at least $0.819$, which implies $\underline{\lambda_c}(1/10)\leq 6.875$ and by monotonicity in $\tau$, that $\lambda_c(\tau) < 6.875$ when $\tau \leq 1/10$, so that for the SEIS process $\lambda_c^+(\tau)< 6.875$ when $\tau \leq 1/10$.

\subsection{Estimates of $\lambda^-$}
For the upperbound process we use the method of \cite{ziezold}.  Starting with the upperbound process on $\mathbb{Z}$ with $\tau>0$ and initial configuration $\eta_0(x)=3$ for $x \geq 0$ and $\eta_0(x)=0$ for $x<0$, for integer $m\geq 0$ we define the modified process $\eta_t^m(x)$ by evolving like the upperbound process but with the added constraint $\eta_t^m(x)=3$ for $x > l_t^m+m$, where $l_t^m = \inf\{x:\eta_t^m(x)\neq 0\}$.  The vector $v(t) = (\eta_t^m(l_t^m),...,\eta_t^m(l_t^m+m)$ evolves like a finite state continuous time homogenous Markov chain on the state space $S = \{\eta \in \{0,1,2,3\}^{m+1}:\eta(0) \neq 0\}$, so we let $X_n$ denote the embedded jump chain, which evolves on the same space.  For this chain each state communicates with the state defined by $X(x)=3,x=0,...,m$, so the chain is irreducible on $S$ with a unique invariant measure that we denote $\mu$.  Letting $k(i,j)$ denote the number of distinct transitions in $\eta_t^m$ that take $v(t)$ from $i$ to $j$, and letting $\Delta(i,j,r)$ be the increment in $l_t^m$ and $p(i,j,r)$ be the transition probability for the $r^{th}$ transition, $1 \leq r \leq k(i,j)$, we define $\lambda_m$ as $\sup\{\lambda:\mathbb{E}_{\mu}\Delta > 0\}$, where
\begin{equation}\label{increq}
\mathbb{E}_{\mu}\Delta = \sum_{i,j \in S}\mu(i)\sum_{r=1}^{k(i,j)}p(i,j,r)\Delta(i,j,r)
\end{equation}
is the average increment in $l_t^m$ at each transition.  Since $\eta_t^m \geq \eta_t$ and $\mathbb{E}_{\mu}\Delta>0$ implies $l_t^m\rightarrow \infty$, if $\lambda<\lambda_m$ then $l_t\rightarrow\infty$.  A simple coupling argument as in \cite{speed} then shows that the upperbound process on $\mathbb{Z}$ started from a finite number of active sites dies out with probability 1 when $l_t\rightarrow\infty$, which implies $\lambda_m \leq \overline{\lambda}_c$.  To estimate $\lambda_m$ we construct the embedded jump chain, compute $\mu$, and iterate to find $\lambda$ such that $\mathbb{E}_{\mu}\Delta \approx 0$.  With $m=3$ we obtain the table of values given in Theorem \ref{thmquant}.

\subsection{Estimate of $\lambda_c^{\infty}$}
Defining an active path for the limit process as in Definition \ref{defactive}, we have that active paths take active points to active points, so we can use the method of \cite{block} as described.  Doing so with $L=10,j=4$ and $i=2$ and $T$ chosen to give an average of 650 labels gives the upper bound on $\lambda_c^{\infty}$.\\

To get a lower bound we use again the method of \cite{ziezold}.  From the limit process $\zeta_t$, for $m\geq 0$ the modified process $\zeta_t^m$ is defined by evolving like the limit process but with the added constraint $\zeta_t^m(x)=1$ for $x>l_t^m+m$, with $l_t^m$ as defined above.  The definition of $\lambda_m$, $\mu$ and $\mathbb{E}_{\mu}\Delta$ are as above, with $\mu$ now supported on the state space $\{\eta \in \{0,1\}^{m+1}:\eta(0)\neq 0\}$.  We give the coupling argument that shows $l_t\rightarrow\infty$ implies the limit process started from a finite number of active sites dies out with probability 1:  letting $\mathbf{1}$ denote the indicator function define $\zeta_t^-,\zeta_t^0$ and $\zeta_t^+$ by $\zeta_0^-= \mathbf{1}(x\leq 0)$, $\zeta_0^0 = \mathbf{1}(x=0)$ and $\zeta_0^+ = \mathbf{1}(x\geq 0)$, then by monotonicity $\zeta_t^0 \leq \min \zeta_t^-,\zeta_t^+$.  If $l_t^+ = \inf\{x:\zeta_t^+(x)\neq 0\} \rightarrow\infty$ then by symmetry $r_t^- = \sup\{x:\zeta_t^-(x)\neq 0\} \rightarrow -\infty$, moreover $r_t^-<l_t^+$ implies $\zeta_t^0 \equiv 0$, so there is almost surely finite time $s$ so that $\zeta_t^0 \equiv 0$ for $t>s$.  An analogous argument works when $\zeta_t^0$ is replaced with $\zeta_t^N$ defined by $\zeta_0^N = \mathbf{1}(x \in [-N,N])$.  Computing $\lambda_m$ for $m=8$ in the same way as above gives the lower bound on $\lambda_c^{\infty}$.

\section{Theorem \ref{thmqual}: Qualitative Estimates}\label{secqual}
In this section we prove Theorem \ref{thmqual}, in five parts: existence of $\lambda_0$, upper bound on $\lambda^+$ as $\tau\rightarrow 0$, lower bound on $\lambda^-$ as $\tau\rightarrow 0$, upper bound on $\lambda^+$ as $\tau\rightarrow \infty$, lower bound on $\lambda^-$ as $\tau\rightarrow \infty$.

\subsection{Existence of $\lambda_0$}
To show the existence of $\lambda_0$ we use a comparison to oriented site percolation, in the spirit of \cite{block}.  As in the previous section, let $L := \{(m,n) \in \mathbb{Z}^2: n \geq 0, m+n \textrm{ is even}\}$ and for $T$ to be determined, define the set of rectangles $\{R_{m,n}:(m,n) \in L\}$ by $R_{0,0} = R = [0,3]\times [0,T]$ and
\begin{equation*}
R_{m,n} = R + (2m,nT)
\end{equation*}
where for a set $S$ and a point $r$, $S+r:=\{s+r:s \in S\}$.  For the SEIS process on $\mathbb{Z}$, the graphical representation embeds in a natural way into the set $\{(x,y)\in\mathbb{R}^2:y\geq 0\}$, as do the rectangles $R_{m,n}$.  In light of the upper bound on $\lambda^+(\tau)$ given for $\tau\leq 1/10$ in Theorem \ref{thmquant}, it is enough to show that survival occurs with positive probability when $\lambda>\lambda_0$ and $\tau \geq \tau_0>0$ for some $\tau_0 \leq 1/10$; we phrase it in this way because we will be able to take $\tau_0>0$ as small as we choose.\\

For each $m$, let $x_m$ denote the site such that $R_{m,n} = [x_m,x_m+3]\times[nT,(n+1)T]$, when $m+n$ is even.  To show survival we define, for each $p<1$, a value $\lambda_0$ such that for each $\lambda$ and $\tau$ there is a time $T$ and a collection of events $A_{m,n}$ each one depending only on the corresponding $\mathcal{F}(R_{m,n})$, the restriction of the substructure to $R_{m,n}$, with the property that if $\eta_{nT}(y)\neq 0$ for some $y \in [x_m,x_m+3]$ and $A_{m,n}$ occurs, then $\eta_{(n+1)T}(y)\neq 0$ for some $y \in [x_m,x_m+1]$ \emph{and} some $y\in [x_m+2,x_m+3]$, and such that if $\lambda>\lambda_0$, then $\mathbb{P}(A_{m,n})\geq p$.  Then, provided $\eta_0(0)\neq 0$, if $\lambda>\lambda_0$ the set of $(m,n)$ such that $\eta_t(x)\neq 0$ for some $(x,t) \in R_{m,n}$ dominates the cluster $C(0,0)$ of oriented percolation with parameter $p$, so if $p$ is chosen close enough to $1$ we conclude that survival in the SEIS process occurs with positive probability uniformly in $\tau$, for any value $\lambda>\lambda_0$.\\

For simplicity we define $A_{m,n}$ to have the property that if $\eta_{nT}(y) \neq 0$ for some $y \in [x_m,x_m+1]$ then $\eta_{(n+1)T}(y) \neq 0$ for some $y \in [x_m,x_m+1]$ and some $y \in [x_m+2,x_m+3]$.  By reflection symmetry of the substructure, if $\mathbb{P}(A_{m,n}) \geq 1-\epsilon/2$ then the probability that the same conclusion holds with the weaker hypothesis $\eta_{nT}(y) \neq 0$ for some $y \in [x_m,x_m+3]$ is at least $1-\epsilon$.  By translation-invariance it is enough to define $A_{0,0}$, for which $x_m=0$.  Letting $B(t) = (\eta_t(0),\eta_t(1))$ and $C(t) = (\eta_t(2),\eta_t(3))$, say that either is \emph{active} at time $t$ if at least one of its two coordinates is not zero.  Then, it is enough that $A_{0,0}$ have the following two properties:
\begin{enumerate}
\item if $B(s)$ is active for some $s \in [0,T)$ then $B(t)$ is active for $s<t\leq T$, and the same is true for $C(t)$, and
\item if $B(0)$ is active (which it is by the assumption $\eta_0(0)\neq 0$) then for some $s \in [0,T)$, $C(s)$ is active
\end{enumerate}
Before choosing $T$ it is convenient to rescale the model in time so that $\star$ labels occur at rate $1$, $\times$ labels at rate $\tau$ and $\leftrightarrow$ labels at rate $\lambda\tau$; this is ok since $T$ is allowed to depend on $\tau$, and is convenient since after rescaling, it won't have to.\\

To get property 1 for both $B(t)$ and $C(t)$ (and a bit more besides) it is enough that the following event depending on a fixed parameter $h>0$, that we denote $E_1$, hold:  in the time interval $[0,h]$ there is a $\leftrightarrow$ label on each of the edges $01,12,23$ before there is a $\times$ label on any of the sites $\{0,1,2,3\}$ and everywhere on the set $[0,3]\times [0,T]$, after any $\star$ label on $\{0,1,2,3\}$ and within at most $h$ amount of time, there is a $\leftrightarrow$ on each of the edges $01,12,23$ before there is a $\times$ label on any of the sites $\{0,1,2,3\}$.  Since after rescaling, $\star$ labels occur at rate $1$, $\times$ labels at rate $\tau$ and $\leftrightarrow$ labels at rate $\lambda\tau$, it is a simple exercise to show that given $h,\tau_0>0$, for each $\epsilon,T>0$ we can choose $\lambda_0$ so that $\mathbb{P}(E_1)\geq 1-\epsilon$ if $\lambda>\lambda_0$ and $\tau>\tau_0$.\\

To get property $2$, it is enough to show that $t\leq T$ where $t$ is the first time such that $\eta_t(1)=2$ and a $\leftrightarrow$ on $12$ occurs at time $t$.  If $E_1$ holds, it is enough to have one of the following:
\begin{itemize}
\item $\eta_0(1)=2$ and $T\geq h$,
\item $\eta_0(1)=1$ and there is a $\star$ label at $(1,t)$ for some $t \leq T-h$,
\item $\eta_0(0)=2$ and there is a $\star$ label at $(1,t)$ for some $h<t \leq T-h$, and
\item $\eta_0(0)=1$ and there is a $\star$ label at $(0,t_1)$ for some $t_1 \leq T-2h$ and a $\star$ label at $(1,t_2)$ for some $t_2 \in (t_1+h,T-h]$
\end{itemize}
Since we can choose $T\geq h$ and since one of the four conditions on $B(0)$ holds by assumption, we denote by $E_2$ the intersection of the label events just described, so that $E_2$ depends only on $\mathcal{F}(R_{0,0})$ as desired.  Given $\epsilon>0$, it is not hard to check that for $T$ large enough, $\mathbb{P}(E_2) \geq 1-\epsilon/2$.  Then, we can choose $\lambda_0$ so that if $\tau>\tau_0$ and $\lambda>\lambda_0$, $\mathbb{P}(E_1)\geq 1-\epsilon/2$, so that $\mathbb{P}(E_1 \cap E_2)\geq 1-\epsilon$, and this completes the proof.

\subsection{Upper bound on $\lambda^+$ as $\tau\rightarrow 0$}\label{seccat}
We now show that $\lambda^+(\tau),\lambda^-(\tau) \rightarrow \lambda_c^0$, the critical value of the contact process, as $\tau\rightarrow 0$.  We first use the lowerbound process to show that $\limsup_{\tau\rightarrow 0}\lambda^+(\tau) \leq \lambda_c^0$.  In the proof we mention a $1$-dependent oriented site percolation process with parameter $p$; this is a model in which each site is open with probability $p$, and sites $(m_1,n_1),...,(m_k,n_k)$ are independent provided $|m_i-m_j|+|n_i-n_j|>2$ for $i \neq j$.  The definition of paths, clusters and percolation is the same as before.  As shown in \cite{supercrit}, for a $1$-dependent oriented site percolation process, if $p=1-\epsilon$ for $\epsilon>0$ small enough, then percolation from $(0,0)$ occurs with positive probability.\\

In \cite{supercrit} it is shown for the contact process that if $\lambda>\lambda_c^0$ then for each $\epsilon>0$, for a suitable choice of rectangles $R_{m,n}\in \mathcal{S}$ and $\mathcal{F}(R_{m,n})$-measurable events $A_{m,n}$ with indicator $I_{m,n}$, the set $\{(m,n)\in L:I_{m,n}=1\}$ dominates a 1-dependent oriented site percolation process with parameter $ p=1-\epsilon$.  The $1$-dependence arises from the fact that $R_{m,n}$ overlaps with each of $R_{m\pm 1,n\pm 1}$ on a set of positive measure in $\mathcal{S}$; see Fig. 1 in \cite{supercrit} for a picture.  The events $A_{m,n}$ are such that for appropriate choice of initial configuration with finitely many active sites, the set of $(m,n) \in L$ such that there is an active point in $R_{m,n}$ stochastically dominates $C(0,0)$, so to show survival it is \emph{sufficient} to have $\mathbb{P}(A_{m,n}) \geq 1-\epsilon$ for $\epsilon>0$ small enough.\\

Now, since each rectangle $R_{m,n}$ is bounded, if $\mathbb{P}_{\lambda}(A_{m,n})>p$ then for small enough $\delta>0$, $\mathbb{P}_{\lambda-\delta}(A_{m,n})>p$.  To see this, proceed as in Section \ref{secintro}:  first generate transmission labels $\leftrightarrow$ and $\leftrightarrow'$ using independent p.p.p's with intensity $\lambda-\delta$ and $\delta$ respectively, then let the process with transmission parameter $\lambda$ use both types of labels, and let the process with transmission parameter $\lambda-\delta$ use only the labels $\leftrightarrow$.  Since $R$ is a bounded region in spacetime, with probability tending to $1$ as $\delta\rightarrow 0$ there are no $\leftrightarrow'$ labels in $R$, and on this event the p.p.p.'s for the two processes agree on $R$.  Viewing the contact process as the lowerbound process with $\tau=0$, a similar argument shows that if $\mathbb{P}_{\lambda,0}(A_{m,n})>p$ then for small enough $\delta$ and $\tau$, $\mathbb{P}_{\lambda-\delta,\tau}(A_{m,n})>p$; for a detailed argument see \cite{fox}, but the main idea is that for a bounded region in spacetime, with probability tending to $1$ as $\tau\rightarrow 0$, between any two consecutive labels of type $\leftrightarrow$ or $\times$ in time there is a $\star$ label at every site.  Therefore if $\lambda>\lambda_c^0$ then taking $p=1-\epsilon$ where $\epsilon>0$ is such that percolation occurs with positive probability in a $1$-dependent oriented site percolation model with parameter $p$, for small enough $\delta,\tau>0$, started from some initial configuration with finitely many active sites the lowerbound process with parameters $\lambda-\delta,\tau$ survives with positive probability.  In other words, $\lambda>\lambda_c^0$ implies $\lambda>\lambda_c(\tau)$ for small enough $\tau>0$, and noting that $\lambda^+(\tau)\leq\lambda_c(\tau)$ the desired result follows.

\subsection{Lower bound on $\lambda^-$ as $\tau\rightarrow 0$}
Next we show that $\liminf_{\tau\rightarrow 0}\lambda^-(\tau) \geq \lambda_c^0$, using the upperbound process and a comparison.  In Section \ref{secquant} we described the method of \cite{ziezold} that gives, for each $\tau>0$, a sequence of lower bounds $\lambda_0(\tau)\leq\lambda_1(\tau)\leq\lambda_2(\tau)\leq...\leq\overline{\lambda}_c(\tau)$, each is which is determined by a process $\eta_t^m$ that approximates the upperbound process; we did not show that the $\lambda_m$ are increasing but this follows from the observation $\eta_t^m \geq \eta_t^{m+k}$ for $k\geq 0$.  For $\tau=0$ which is the contact process, starting from the process with initial configuration $\eta_0(x)=2$ for $x \geq 0$ and $\eta_0(x)=0$ for $x<0$, then for integer $m\geq 0$ define $\eta_t^m$ by evolving like the contact process but with the added constraint $\eta_t^m(x)=2$ for $x>l_t^m+m$, with again $l_t^m= \inf\{x:\eta_t^m(x)\neq 0\}$.  The definition of $\lambda_m$, $\mu$ and $\mathbb{E}_{\mu}\Delta$ are as before.  Since $\eta_t^m \geq \eta_t^{m+k}$ we have again $\lambda_0(0)\leq \lambda_1(0) \leq ... \lambda_c^0$.  It is shown in \cite{ziezold} that $\lambda_m(0)\uparrow\lambda_c^0$; the proof relies on the fact, proved as Theorem 4 in \cite{basic}, that $\lambda_c^0=\sup\{\lambda:\alpha > 0\}$, where for $l_t =\inf\{x:\eta_t(x)\neq 0\}$, $\alpha = \lim_{t\rightarrow\infty}l_t/t$ was shown to exist in \cite{speed}.\\

For $\tau>0$, $\mu$ is supported on $S = \{\eta \in \{0,1,2,3\}^{m+1}:\eta(0)\neq 0\}$ and for $\tau=0$, $\mu$ is supported on $S_0 = \{\eta \in \{0,2\}^{m+1}:\eta(0)\neq 0\} \subset S$.  Define $S_1 = \{\eta \in S:\eta(x)\in \{1,3\} \textrm{ for some }x \textrm{ and }\eta(y)\in \{0,2\}, y \neq x\}$, the set of states with exactly one exposed site.  Modify the embedded jump chain $X_n$ when $\tau=0$ to include intermediate transitions to configurations with an exposed i.e. type 1 or type 3 site; $\mu$ is then supported on $S_0\cup S_1$ and from any state in $S_1$ there is a unique transition with probability 1 to the corresponding state in $S_0$ in which onset of the exposed site has occurred.  Since this modification preserves the sign of $\mathbb{E}_{\mu}\Delta$ the value of $\lambda_m$ is unchanged.  Writing $\mathbb{E}_{\mu}\Delta(m,\lambda,\tau)$ to emphasize the dependence and noting that $\lambda_m(\tau) = \sup\{\lambda:\mathbb{E}_{\mu}\Delta(m,\lambda,\tau)>0\}$, to prove the result it suffices to show that for each $m$ and $\lambda$, $\mathbb{E}_{\mu}\Delta(m,\lambda,\tau)$ is continuous in $\tau$ at $\tau=0$, since if $\lambda<\lambda_c^0$ then $\lambda<\lambda_m(0)$ for some $m$ and $\mathbb{E}_{\mu}\Delta(m,\lambda,0)>0$ and then by continuity $\mathbb{E}_{\mu}\Delta(m,\lambda,\tau)>0$ and thus $\lambda<\lambda_m(\tau)$ for small enough $\tau>0$ and the result follows from the inequalities $\lambda_m(\tau)\leq \overline{\lambda}_c(\tau) \leq \lambda^-(\tau)$.\\

Recall equation \eqref{increq}:
\begin{equation*}
\mathbb{E}_{\mu}\Delta = \sum_{i,j \in S}\mu(i)\sum_{r=1}^{k(i,j)}p(i,j,r)\Delta(i,j,r)
\end{equation*}
Thus to show $\mathbb{E}_{\mu}\Delta(m,\lambda,\tau)\rightarrow \mathbb{E}\Delta(m,\lambda,0)$ as $\tau\rightarrow 0$ it suffices to show $\mu(i)(m,\lambda,\tau) \rightarrow \mu(i)(m,\lambda,0)$ for $i \in S$ and $p(i,j,r)(m,\lambda,\tau)\rightarrow p(i,j,r)(m,\lambda,0)$ as $\tau\rightarrow 0$ for $i,j \in S$ and $r=1,...,k(i,j)$.  In fact, we can make a further reduction.\\

\begin{lemma}
Let $p(i,j)(s)$, $0 \leq s \leq 1$, be a family of transition probabilities on a finite state space $S$ such that for each $s$, $p(i,j)(s)$ has a unique invariant measure $\mu(i)(s)$.  If $p(i,j)(s) \rightarrow p(i,j)(0)$ as $s \rightarrow 0$ for each $i,j \in S$ then $\mu(i)(s) \rightarrow \mu(i)(0)$ as $s\rightarrow 0$ for $i \in S$.
\end{lemma}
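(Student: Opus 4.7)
The plan is to exploit compactness of the probability simplex on a finite state space together with the uniqueness assumption at $s=0$. Since $S$ is finite, the set of probability measures $\mathcal{M}(S)$ is a compact subset of $\mathbb{R}^{|S|}$, so every sequence in $\mathcal{M}(S)$ has a convergent subsequence, and convergence of measures is equivalent to coordinate-wise convergence.

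First I would fix any sequence $s_n \to 0$ in $[0,1]$ and consider the corresponding sequence of invariant measures $\mu(\cdot)(s_n) \in \mathcal{M}(S)$. By compactness, pass to a subsequence (still denoted $s_n$) along which $\mu(i)(s_n) \to \nu(i)$ for each $i \in S$, for some $\nu \in \mathcal{M}(S)$. Next I would verify that $\nu$ is invariant for $p(i,j)(0)$. For each $s_n$ and each $j \in S$ we have the exact identity
\begin{equation*}
\sum_{i \in S}\mu(i)(s_n)\,p(i,j)(s_n) = \mu(j)(s_n).
\end{equation*}
Since $S$ is finite, the sum has finitely many terms, and by the hypothesis $p(i,j)(s_n) \to p(i,j)(0)$ and the subsequential convergence $\mu(i)(s_n) \to \nu(i)$, I can pass to the limit term-by-term to obtain
\begin{equation*}
\sum_{i \in S}\nu(i)\,p(i,j)(0) = \nu(j) \quad \text{for all } j \in S,
\end{equation*}
so $\nu$ is invariant for $p(\cdot,\cdot)(0)$. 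By the uniqueness assumption at $s=0$, $\nu = \mu(\cdot)(0)$.

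Finally I would conclude by the standard subsequential argument: every sequence $s_n \to 0$ admits a subsequence along which $\mu(\cdot)(s_n) \to \mu(\cdot)(0)$, and the limit is the same for every such subsequence, so the full family converges, $\mu(i)(s) \to \mu(i)(0)$ as $s \to 0$ for each $i \in S$. There is no real obstacle here since both the state space and the dimension of $\mathcal{M}(S)$ are finite; the only point requiring a touch of care is noting that uniqueness of the invariant measure at $s=0$ is needed to rule out distinct subsequential limits (without uniqueness one would only obtain that all limit points lie in the convex set of invariant measures for $p(\cdot,\cdot)(0)$).
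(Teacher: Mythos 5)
Your proof is correct and follows essentially the same route as the paper: compactness of the probability simplex on the finite state space, passing to the limit in the finite invariance identity along a convergent subsequence, and invoking uniqueness of the invariant measure at $s=0$ to identify every subsequential limit with $\mu(\cdot)(0)$. The paper phrases this as a direct contradiction argument, but the content is identical.
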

\begin{proof}
Let $n = |S|$ and define the simplex $\Lambda = \{x \in \mathbb{R}^n:x_i\geq 0, i=1,...,n,\,\,\sum_i x_i =1\}$ that corresponds to probability measures on $S$.  Suppose by way of contradiction that there is a sequence $(s_k)$ tending to $0$ and an $\epsilon>0$ such that $\max_i |\mu(i)(s_k)-\mu(i)(0)|>\epsilon$ for each $k$.  By compactness of $\Lambda$ there is a subsequence $(s_{k_m})$ tending to $0$ and an element $\mu^* \in \Lambda$ with $\mu^*\neq \mu(0)$ such that $\mu(i)(s_{k_m}) \rightarrow \mu^*(i)$ for each $i$.  However, since $p(i,j)(s_{k_m})\rightarrow p(i,j)(0)$ for each $(i,j)$ and each $\mu(s_{k_m})$ is invariant for $p(i,j)(s_{k_m})$, $\mu^*$ is invariant for $p(i,j)(0)$, and by assumption of uniqueness, $\mu^* = \mu(0)$, a contradiction.
\end{proof}

By the lemma above, since for $X_n$, $p(i,j) = \sum_{r=1}^{k(i,j)}p(i,j,r)$, and $\mu$ is determined from $p(i,j)$ it suffices to show $p(i,j,r)(m,\lambda,\tau)\rightarrow p(i,j,r)(m,\lambda,0)$ as $\tau\rightarrow 0$.  For each $i,j \in S$ and $r \in \{1,...,k(i,j)\}$ the transition rate $q_{ijr}$ for $\eta_t^m$ is of the form $1/\tau$, $\lambda$, $2\lambda$ or $1$.  Using the formula $p_{ijr} = q_{ijr}/\sum_{(k,s) \neq (i,r)}q_{iks}$ and writing $p_{ijr}(m,\lambda,\tau)$ to emphasize the dependence, fix $m$ and $\lambda$.  For $i \in S_0$, $p_{ijr}(m,\lambda,\tau)$ does not depend on $\tau$ and agrees with $p(i,j,r)(m,\lambda,0)$ provided we include intermediate transitions in $X_n$ as discussed above, and for $i \in S_1$ there is a unique $j \in S_0$ and $r\in \{1,...,k(i,j)\}$ such that $p_{ijr}(m,\lambda,\tau)\uparrow 1$ as $\tau \downarrow 0$; $j$ and $r$ are determined by forcing onset to occur, which again agrees with $X_n$.  Defining $S_2 = S\setminus (S_0\cup S_1)$, for $i\in S_2$ each $p(i,j,r)$ converges to some number $a(i,j,r)$ such that $\sum_{j \in S,r=1,...,k(i,j)}a(i,j,r)=1$ and such that with respect to $a(i,j,r)$, $S_0\cup S_1$ is accessible from $S_2$; since for $\tau=0$, $S_0\cup S_1$ is invariant, we can define $p(i,j,r)(m,\lambda,0)=a(i,j,r)$ without affecting $X_n$, so convergence of the $p(i,j,r)$ is proved, \& we are done.\\

\subsection{Upper bound on $\lambda^+$ as $\tau\rightarrow\infty$}\label{secconcat}
We use the same approach as in the case $\tau\rightarrow 0$, and begin by describing the construction of \cite{supercrit} in somewhat better detail.\\

As mentioned in Section \ref{seccat}, the idea of \cite{supercrit} is that given $p<1$, we can choose rectangles $R_{m,n}$ and initial data for the process so that the set of $(m,n) \in L$ such that there is an active point in $R_{m,n}$ dominates the cluster $C(0,0)$ in a $1$-dependent oriented percolation model with parameter $p$.  The specific event on $R_{m,n}$ that allows this is the existence of active paths going from the bottom centre of $R_{m,n}$ to the top left and top right, with the property that if, say, $R_{m,n}$ and $R_{m-1,n+1}$ both have these paths, then said paths can be concatenated to form a longer active path through both rectangles.\\

Our strategy is first to show that the construction of \cite{supercrit} applies to the limit process, so that if $\lambda>\lambda_c^{\infty}$ then for any $p<1$ we can choose $R_{m,n}$ for the limit process that have the desired active paths, in a $1$-dependent way, with probability $\geq p$ for each $(m,n)$.  Then, by defining a condition for initial data at the base of $R_{m,n}$, and an event on each rectangle $R_{m,n}$, that allow us to deal with the possibility of discrepancies as encountered in the proof of Theorem \ref{thmlimit} in Section \ref{seclimit}, we can show that with nearly the same probability, for $\tau$ large enough the SEIS process has the same active paths, and these paths can be concatenated.  First we address applicability of the construction of \cite{supercrit} to the limit process.  

\begin{lemma}\label{limcomp}
The comparison to oriented percolation given in \cite{supercrit} is valid for the limit process on $\mathbb{Z}$, i.e., if $\lambda>\lambda_c^{\infty}$ and $\epsilon>0$ then suitably rescaled, the limit process dominates oriented site percolation with parameter $p>1-\epsilon$.
\end{lemma}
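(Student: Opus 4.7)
The plan is to verify that the limit process enjoys the three structural features on which the Bezuidenhout--Grimmett style construction of \cite{supercrit} is built: monotonicity, spatial translation invariance, and additivity. Monotonicity and translation invariance are immediate from the graphical representation of the limit process given in Section \ref{secintro}. The remaining property, additivity, is the statement that if $\zeta_t^A$ denotes the limit process started from initial occupied set $A$ and all processes are coupled through the common substructure, then $\zeta_t^{A\cup B} = \zeta_t^A \cup \zeta_t^B$ for all $t\ge 0$. My first step is to prove this by invoking the characterization stated just after Definition \ref{defactive}: a point $(y,s)$ is active for $\zeta^A$ if and only if there is an active path with base in $A\times\{0\}$ and end $(y,s)$. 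Since active paths depend only on the substructure and not on which initial configuration they happen to start from, the set of active endpoints is simply the union over active bases, which is additivity.

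With additivity in hand, the plan is to run the construction of \cite{supercrit} essentially verbatim. One fixes rectangles $R_{m,n}\subset \mathcal{S}$ of a scale to be determined and $\mathcal{F}(R_{m,n})$-measurable events $A_{m,n}$ whose occurrence guarantees active paths from the bottom-centre of $R_{m,n}$ to suitable target sets at the top-left and top-right, so that active paths in neighbouring rectangles can be concatenated via monotonicity and additivity. The one-dependence comes from the fact that $R_{m,n}$ overlaps only with $R_{m\pm 1,n\pm 1}$, so blocks at $\ell^1$-distance greater than two depend on disjoint substructure regions and are therefore independent. Given any $p<1$, the goal is to tune the spatial width and temporal length of the blocks so that $\mathbb{P}(A_{0,0})\ge p$; by translation invariance this bound then holds for every $(m,n)$.

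The tuning step is the main obstacle, and it requires an analogue for the limit process of the restart argument used in \cite{supercrit}. The content of that argument is that $\lambda>\lambda_c^\infty$ together with positive survival probability from a finite configuration can be upgraded to the statement that, from sufficiently large finite configurations in a bounded spacetime window, one has with probability as close to one as desired many active sites at a later time in a prescribed target region. For the contact process this step uses only monotonicity, additivity, and locality of the interaction. Monotonicity and additivity are available here, but the limit process is not strictly local: a single $\star$ label at $(x,t)$ may, via the dispersal distribution $p(x,\cdot)$, activate sites arbitrarily far from $x$. The way I would handle this is to note, as in the discussion preceding the construction of $\eta_t$ in Section \ref{secintro}, that the set of sites activated at a given $\star$ is stochastically dominated by the progeny of a branching random walk in which each particle produces a pair of offspring at rate $\lambda d$ with no deaths, so dispersal sets are a.s. finite with exponentially integrable diameter. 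Consequently one can truncate the graphical representation to a sufficiently large finite spatial window, incurring an arbitrarily small probability of discrepancy within the spacetime box under consideration, and on the truncated window the restart argument of \cite{supercrit} applies in its original form.

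Once the restart argument has been localised in this way, the remainder of the construction is mechanical: choose the width and height of $R_{0,0}$ large enough that, first, the restart argument furnishes the required active paths with probability at least $1-\epsilon/2$, and second, the probability that any $\star$-induced dispersal originating inside $R_{0,0}$ exits the block is at most $\epsilon/2$. Then $\mathbb{P}(A_{0,0})\ge 1-\epsilon$, and translation invariance promotes this to every $(m,n)\in L$, giving domination over $1$-dependent oriented site percolation with parameter $p=1-\epsilon$, as required.
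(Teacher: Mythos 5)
There is a genuine gap, stemming from a misreading of the model and from what \cite{supercrit} actually requires. First, the dispersal distribution $p(x,\cdot)$ is supported on subsets of the \emph{neighbours} of $x$: by definition only direct transmissions from $x$ before its recovery count (``ignoring subsequent transmissions from other newly infected sites''), so on $\mathbb{Z}$ every $\star$ event activates a subset of $\{x-1,x+1\}$. The limit process is therefore a nearest-neighbour model, and the central technical device of your proposal --- the branching-random-walk bound on dispersal diameters and the truncation of the graphical representation to control ``non-local'' dispersal --- addresses a problem that does not exist. This is not merely wasted effort: your localisation step is the only place where you argue that the construction of \cite{supercrit} transfers to the limit process, and it rests on a false description of the dynamics.

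Second, and more seriously, monotonicity, translation invariance and additivity are not the full list of hypotheses under which the comparison of \cite{supercrit} applies. That construction is for one-dimensional nearest-neighbour \emph{additive growth models} and uses, in addition to additivity, the edge coupling property of Lemmas 3.1 and 3.4 of \cite{speed}: with $\zeta_0^1\equiv 1$, $\zeta_0^+=\mathbf{1}(x\geq 0)$, $\zeta_0^0=\mathbf{1}(x=0)$ coupled through the substructure, the single-site process agrees with $\zeta_t^+$ and $\zeta_t^1$ on the interval $[l_t^0,r_t^0]$ between its own edges, and $l_t^0=l_t^+$ while $r_t^0>-\infty$ (checked by examining transitions near the edges, which is where the nearest-neighbour structure enters). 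This property is what lets the block events be built from edge behaviour and is also what later yields $\lambda_c^\infty=\sup\{\lambda:\alpha>0\}$; your proposal never verifies it, and instead asserts that a ``restart argument \ldots applies in its original form,'' which both mislabels the construction in \cite{supercrit} (it is the Durrett--Griffeath edge-based construction, not a Bezuidenhout--Grimmett restart) and leaves the key verification unargued. One should also record that the notion of critical value used here (survival with positive probability from a single occupied site) matches the one in \cite{supercrit}. The additivity verification you give via active paths is fine, but by itself it does not close the lemma.
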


\begin{proof}
First note that our definition of the critical value is the same as theirs, namely of survival with positive probability starting from a single infectious site.  As mentioned at the end of Section 2 of \cite{supercrit}, their construction is valid for a broader class of models they call ``nearest neighbour additive growth models'' that includes the limit process.  The key properties required are additivity, described in Section 4 of \cite{speed}, and the coupling property described in Lemmas 3.1 and 3.4 of \cite{speed}, both of which are easily verified to hold for the limit process.  Additivity means that for configurations $\zeta$ and $\zeta'$, defining $\zeta\vee\zeta'$ for each $x$ by $(\zeta\vee\zeta)(x) = \max (\zeta(x),\zeta'(x))$, then with respect to the coupling given by the graphical construction, if $\zeta_0'' = \zeta_0\vee\zeta_0'$ then $\zeta_t'' = \zeta_t\vee\zeta_t'$ for $t>0$.  The desired coupling property is that, if we let $\zeta_0^1(x) \equiv 1$, $\zeta_0^+(x) = \mathbf{1}(x \geq 0)$ and $\zeta_0^0(x) = \mathbf{1}(x=0)$ then for $l_t^+ = \inf\{x:\zeta_t^+(x) \neq 0$, $l_t^0 = \inf\{x:\zeta_t^0(x) \neq 0\}$ or $=\infty$ if $\zeta_t^0(x)=0$ for all $x$, and $r_t^0 = \inf\{x:\zeta_t^0(x) \neq 0\}$ or $=-\infty$ if that set is empty, it holds that $\zeta_t^0 = \zeta_t^+ \cap [l_t^0,r_t^0] = \zeta_t^1 \cap [l_t,r_t]$ i.e., on the interval $[l_t,r_t]$, $\zeta_t^0$ agrees with $\zeta_t^+$ and with $\zeta_t^1$, and if $r_t^0>-\infty$ then $l_t^0 = l_t^+$.  The coupling property can be checked by examining transitions occurring near the endpoints $l_t^0,r_t^0$.
\end{proof}

Next we introduce two slightly different definitions of active path for the SEIS process that will be helpful.  An active path literally implies that all points along that path are active, while a potentially active path will, under some additional conditions on the state of the process near the base of the path and on the surrounding substructure, also be active.

\begin{definition}\label{2defactive}
For the SEIS process $\eta_t$, given $\eta_0$ and $\tau>0$, a path as defined in Definition \ref{defpath} is \emph{active} if for $i=1,...,m-1$, $\eta_{t_i}(x_i)=2$, $\eta_{t_i}(x_{i+1})=1$, there is a $\leftrightarrow$ label at $h_i$, and for $i=1,...,m$, $\eta_t(x_i)\neq 0$ for $t \in (t_{i-1},t_i)$.
\end{definition}

\begin{definition}
For the SEIS process and a path $\gamma=(v_1,h_1,...,v_{m-1},h_{m-1},v_m)$ as defined in Definition \ref{defpath} say that $\gamma$ is \emph{potentially active} if
\begin{enumerate}
\item for $i=1,...,m-1$, there is a $\leftrightarrow$ label at $h_i$,
\item for $i=1,...,m-1$ there is a $\star$ label at a point $(x_i,t) \in v_i = \{x_i\}\times (t_{i-1},t_i)$ called the \emph{activating} label, which is such that there are no $\star$ labels in $\{x_i\}\times (t_{i-1},t)$ and no $\times$ labels in $\{x_i\}\times(t,t_i)$, and
\item for any $\star$ label at a point $(x_m,t) \in v_m = \{x_m\}\times(t_{m-1},t_m)$ there are no $\times$ labels in $\{x_m\}\times (t,t_m)$.
\end{enumerate}
\end{definition}

Next we generalize the condition on the initial configuration given in Theorem \ref{thmlimit}, and the event described in Lemma \ref{onset}, to a larger class of spacetime sets.  Together these are probably the simplest conditions under which the SEIS process is well behaved.

\begin{definition}\label{defoo}
Let $R \subset \mathcal{S}$ be a set in spacetime which is the closure of an open set, and define the \emph{base} of $R$ as
\begin{equation*}
\base(R) = \{(x,t) \in R: (x,t-\epsilon) \notin R \textrm{ for all small enough }\epsilon>0\}
\end{equation*}
For the rescaled SEIS process, say that $R$ is \emph{onset-ordered} if the event described in Lemma \ref{onset} holds on $R$ i.e. if there is a $\star$ label at $(x,t) \in R$ and $\{x\} \times (t,t') \subset R$ for $t<t'$ then for any $\star$ label at a point $(y,s)$, $s \in (t,t')$ there is a $\times$ label at $\{x\}\times s'$ for some $s' \in (t,s)$.  Say that $\eta_t$ is \emph{good} for $R$ if $\eta_t(x)\neq 2$ for $(x,t) \in \base(R)$.
\end{definition}

The following result allows us to promote a potentially active path to an active path, when the above-stated conditions hold.

\begin{lemma}\label{potact}
Let $R$ be as in Definition \ref{defoo}.  If $R$ is onset-ordered and $\eta_t$ is good for $R$ then
\begin{enumerate}
\item for each $t\geq 0$, $\eta_t(x)=2$ for at most one $x$ in the set $\{x \in V:(x,t) \in R\}$, and
\item a potentially active path $\gamma$ with base $(y,s)$ satisfying $\eta_s(y)=1$ is active in the sense of Definition \ref{2defactive}.
\end{enumerate}
\end{lemma}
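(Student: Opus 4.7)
The plan is to establish part (1) first, since it will serve as the key input in handling a potential obstruction in part (2). For part (1), I would look at the first time at which two sites in $R$ are simultaneously infectious and derive a contradiction with the onset-ordered condition. For part (2), I would proceed by induction along the path, tracing $x_i$'s state along each vertical segment using the potentially active structure, and invoking part (1) at each transmission time to guarantee the correct outcome.

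For part (1), the key observations are that in the SEIS dynamics a site enters type $2$ only via a $\star$ label (since a $\leftrightarrow$ transmission produces type $1$), goodness of $\eta$ for $R$ rules out type $2$ at the base of $R$, and the onset-ordered condition rules out overlapping infectious intervals. Set $T = \inf\{t : \eta_t(x) = \eta_t(y) = 2 \text{ for some } x \neq y \text{ with } (x,t), (y,t) \in R\}$ and suppose $T < \infty$. Just before $T$ there is a unique infectious site $x$ in $R$, arising from a $\star$ at some earlier $(x, s_x)$, and at $T$ a second site $y$ enters type $2$, necessarily via a $\star$ at $(y,T)$ applied to $\eta_{T-}(y) = 1$. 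Applying the onset-ordered condition to this pair of $\star$ labels along the tube $\{x\} \times (s_x, T)$ (which stays in $R$ throughout $x$'s continuous infectious period) forces a $\times$ label at $x$ at some time in $(s_x, T)$, contradicting the fact that $x$ is still infectious at $T-$.

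For part (2), I would argue by induction on $i = 1, \ldots, m-1$ that $\eta_{t_i}(x_i) = 2$, $\eta_{t_i}(x_{i+1}) = 1$, and $\eta_t(x_i) \neq 0$ for $t \in (t_{i-1}, t_i)$. The base case uses $\eta_{t_0}(x_1) = 1$: on $v_1$, the clauses of potentially active (the activating label $(x_1, t_1^*)$ is the first $\star$ on $v_1$, and no $\times$ appears on $(t_1^*, t_1)$) combined with the fact that type $1$ is inert to both $\times$ and $\leftrightarrow$ events, imply that $x_1$ remains in type $1$ until $t_1^*$ and then enters and remains in type $2$ through $t_1$. At $t_1$, the $\leftrightarrow$ at $h_1$ fires with $\eta_{t_1-}(x_1) = 2$; by part (1), $\eta_{t_1-}(x_2) \in \{0, 1\}$, and in either case the transmission rule yields $\eta_{t_1}(x_2) = 1$. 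The inductive step is identical. For the final segment $v_m$, the condition that every $\star$ on $v_m$ has no subsequent $\times$ implies that once $x_m$ enters type $2$ (if ever) it remains there through $t_m$, so $\eta_t(x_m) \neq 0$ on $(t_{m-1}, t_m)$.

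The main obstacle is precisely the transmission step: without part (1) one cannot rule out $\eta_{t_i-}(x_{i+1}) = 2$, in which case the $\leftrightarrow$ at $h_i$ would fail to produce the required type $1$ at $x_{i+1}$. This is exactly what part (1) is engineered to prevent, and it dictates the order of the proof. A minor technical point is verifying the geometric assumption used in (1), that $x$'s tube remains inside $R$ throughout its infectious interval; this is transparent in the intended applications where $R$ is a well-behaved region such as a rectangle.
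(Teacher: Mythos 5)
Your proof is correct and follows essentially the same route as the paper: part (1) rests on the observations that an infectious state can only arise from a $\star$ label (goodness excluding inheritance of type $2$ from $\base(R)$), so two simultaneous infectious sites in $R$ would violate the onset-ordered condition, and part (2) is the straightforward induction along the path using the transition rules with part (1) ruling out a type-$2$ target at each transmission. The only cosmetic difference is that you argue via the first time two infectious sites coexist while the paper traces back from an arbitrary such time, and the paper leaves part (2) as a one-line remark, so your write-up is if anything more detailed; the geometric caveat you flag (the infectious tube staying in $R$) is handled in the paper by tracing back only within $R$ until either a $\star$ in $\intr(R)$ or a point of $\base(R)$ is reached, and is harmless for the rectangular regions actually used.
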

\begin{proof}
If $\eta_t(x)=2$ for $(x,t) \in R$ then by tracing back in time along the fiber $\{x\}\times \mathbb{R}^+$, there is a point $(x,s)$, $s \leq t$ such that if $s<t$ then $\{x\}\times (s,t) \in R$ and there are no $\times$ labels in $\{x\}\times (s,t)$, and such that either
\begin{enumerate}
\item $(x,s) \in \intr(R)$, the interior of $R$, $\eta_{s-\epsilon}(x)=1$ for all small enough $\epsilon>0$ and there is a $\star$ label at $(x,s)$ or
\item $(x,s)\in \base(R)$ and $\eta_s(x)=2$
\end{enumerate}
If $\eta_t$ is good for $R$ then the second case does not occur.  Suppose $\eta_t(x)=\eta_t(y)=2$ for some $x \neq y$, and let $s_x,s_y$ be the times such that $(x,s_x)$ and $(y,s_y)$ have the property stated above; we may assume that $s_x \leq s_y$, and since labels almost surely do not occur simultaneously, that $s_x<s_y$.  But then there is an interval $\{x\}\times (s_x,t) \in R$ and a $\star$ label at a point $(y,s_y) \in R$, $s_x<s_y<t$, such that there are no $\times$ labels in the interval $\{x\}\times (s_x,s_y)$, so $R$ is not onset-ordered.  The second statement follows from the first statement, the definition of potentially active path, and the transition rules.
\end{proof}

The next result allows us, under fairly mild conditions, to concatenate a collection of potentially active paths into a longer potentially active path.  The words before, after, starts, etc. are with respect to the order of events in time.

\begin{lemma}\label{concat}
Let $R$ be as in Definition \ref{defoo} and suppose $\gamma_1,...,\gamma_k$ are potentially active paths on $R$ with respective bases $(x_1,t_1),...,(x_k,t_k)$ satisfying
\begin{enumerate}
\item $t_i<t_{i+1}$, $i=1,...,k-1$ i.e., $\gamma_i$ starts before $\gamma_{i+1}$, and
\item $(x_{i+1},t_{i+1})$ does not intersect $\gamma_i$, $i=1,...,k-1$ i.e., $\gamma_{i+1}$ does not start somewhere on $\gamma_i$,
\end{enumerate}
and with intersection points $(y_i,s_i) \in \gamma_i \cap \gamma_{i+1},\,\,i=1,...,k-1$, $y_i\in V$, satisfying
\begin{enumerate}
\item $s_i<s_{i+1}$, $i=1,...,k-2$,
\item $y_i\neq y_{i+1}$, $i=1,...,k-2$, and
\item $y_{k-1}$ is not on the last vertical segment of $\gamma_k$.
\end{enumerate}
If $R$ is onset-ordered then the path $\gamma$ obtained by concatenating $\gamma_1,...,\gamma_k$ through the points $(y_i,s_i),\,\, i=1,..,k-1$ is a potentially active path.
\end{lemma}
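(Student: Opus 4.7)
The plan is to verify the three defining conditions of potentially active for $\gamma$, decomposing its segments according to whether they are inherited entirely from a single $\gamma_j$ or are formed by joining two verticals at a switch vertex $y_j$. Every horizontal of $\gamma$ is a horizontal of some $\gamma_j$ and hence carries a $\leftrightarrow$ label by hypothesis, giving condition 1. For vertical segments of $\gamma$ lying entirely inside a single $\gamma_j$, the activating $\star$ together with the absence of a subsequent $\times$ in the vertical is inherited directly. In particular, the final vertical of $\gamma$ is a vertical of $\gamma_k$ by the assumption that $y_{k-1}$ does not lie on the last vertical of $\gamma_k$, so condition 3 is inherited from $\gamma_k$.

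The core of the argument is the vertical at each switch vertex $y_j$, $1\le j\le k-1$. Write $(a,b)$ for the vertical of $\gamma_j$ at $y_j$ and $(c,d)$ for the vertical of $\gamma_{j+1}$ at $y_j$, so the combined vertical in $\gamma$ is $\{y_j\}\times(a,d)$, and let $\tilde s^j\in(a,b)$, $\tilde s^{j+1}\in(c,d)$ be the activating $\star$ labels supplied by the potentially active hypothesis on $\gamma_j$ and $\gamma_{j+1}$. I would use onset-ordered on $R$ to argue that $\tilde s^j=\tilde s^{j+1}$: if they were distinct, the onset-ordered property forces a $\times$ at $y_j$ between them, and the interplay between the no-$\times$-after-$\tilde s^j$ condition in $(\tilde s^j,b)$ coming from $\gamma_j$ and the first-$\star$ property of $\tilde s^{j+1}$ in $(c,\tilde s^{j+1})$ should produce a contradiction, using the fact that $(y_j,s_j)\in\gamma_j\cap\gamma_{j+1}$ forces the two verticals to overlap around $s_j$. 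Once coincidence is established, the activating condition on $(a,d)$ follows by gluing: no $\star$ in $(a,\tilde s^j)$ comes from $\gamma_j$, no $\times$ in $(\tilde s^j,b)$ comes from $\gamma_j$, and no $\times$ in $(\tilde s^{j+1},d)=(\tilde s^j,d)$ comes from $\gamma_{j+1}$, and these two windows cover $(\tilde s^j,d)$.

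The remaining hypotheses, $s_j<s_{j+1}$, $y_j\neq y_{j+1}$, and $(x_{j+1},t_{j+1})\notin\gamma_j$, serve to ensure that successive switch vertices lie in disjoint vertical segments of $\gamma$ and that the concatenated list of segments is a genuine alternating sequence in the sense of Definition~\ref{defpath}, so that the switch-vertex analysis at each $y_j$ can be performed independently using only the labels locally relevant to $\gamma_j$ and $\gamma_{j+1}$. The main obstacle I expect is the coincidence step $\tilde s^j=\tilde s^{j+1}$: onset-ordered only tells us that a $\times$ must separate two distinct $\star$ labels at the same site, and the difficulty is in showing that such a $\times$ cannot be accommodated compatibly with the first-$\star$ conditions for both $\gamma_j$ and $\gamma_{j+1}$ together with the geometric position of $s_j$ in $(\max(a,c),\min(b,d))$. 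Once this step is in place, the rest of the verification is routine bookkeeping.
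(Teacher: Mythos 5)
Your skeleton agrees with the paper's proof: horizontals and untouched verticals are inherited, the assumption on $y_{k-1}$ keeps the last vertical of $\gamma$ equal to the last vertical of $\gamma_k$, and all the work sits at the merged vertical $w_j=\{y_j\}\times[\min u_j,\max v_j]$ at each switch vertex. But your treatment of that core step has a genuine gap, in two places. First, $\tilde s^j$ need not exist: the vertical of $\gamma_j$ through $(y_j,s_j)$ may be the \emph{last} vertical of $\gamma_j$, and condition 3 of potential activity guarantees no $\star$ label there at all (the label on the $\gamma_{j+1}$ side does always exist, since for $j\leq k-2$ that vertical cannot be the last one of $\gamma_{j+1}$, and for $j=k-1$ this is hypothesized). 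Second, and more seriously, the coincidence $\tilde s^j=\tilde s^{j+1}$ cannot be derived, as you propose, ``using only the labels locally relevant to $\gamma_j$ and $\gamma_{j+1}$'' at the single site $y_j$. Write $u_j=\{y_j\}\times(a,b)$, $v_j=\{y_j\}\times(c,d)$ with $a<c<b<d$ and $s_j\in(c,b)$, put the activating $\star$ of $u_j$ at $\tilde s^j\in(a,c)$, the activating $\star$ of $v_j$ at $\tilde s^{j+1}\in(b,d)$, and a single $\times$ at $y_j$ at a time in $(b,\tilde s^{j+1})$. Every condition of potential activity on these two verticals holds, and onset-ordering is satisfied as far as these labels are concerned (the $\times$ is exactly the separator it demands between the two $\star$'s); yet the activating labels are distinct and the merged segment $\{y_j\}\times(a,d)$ has \emph{no} activating label, since $\tilde s^j$ is followed by a $\times$ before $d$ and $\tilde s^{j+1}$ is preceded by a $\star$. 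So from the local data your gluing simply fails; the obstacle you flag is not a technical nuisance but the whole content of the lemma.

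What rules out this configuration is precisely the pair of hypotheses you set aside as bookkeeping. Because $t_j<t_{j+1}\leq\min v_j$, in the case $\min v_j<\min u_j$ the segment $u_j$ is not the first vertical of $\gamma_j$, so there is a preceding vertical $u_{j-1}$ at a neighbouring site carrying its own activating $\star$ with no $\times$ after it; and because $\gamma_{j+1}$ does not start on $\gamma_j$, in the case $\min u_j<\min v_j$ the segment $v_j$ is preceded in $\gamma_{j+1}$ by a vertical $v_{j-1}$ at a neighbouring site, ending at time $\min v_j$, again with an activating $\star$. Onset-ordering is a \emph{cross-site} statement (a $\star$ at one site forbids any further $\star$, at any site, until a $\times$ occurs at the first site), and applied to these neighbouring activating labels it shows, respectively, that the activating label of $v_j$ must lie above $\min u_j$, or that there are no $\star$ labels at $y_j$ on $u_j\setminus v_j$ (in particular your $\tilde s^j\in(a,c)$ cannot occur). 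In either case the activating label of $v_j$ alone activates all of $w_j$; the paper never needs, and never proves, coincidence of two activating labels, nor any label on $u_j$ at all. To repair your argument you would have to bring the neighbouring segments $u_{j-1}$, $v_{j-1}$ into the switch-vertex analysis in exactly this way.
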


\begin{proof}
For $i \in 1,..,k-1$ let $u_i,v_i$ be the vertical segments in $\gamma_i,\gamma_{i+1}$ respectively that satisfy $(y_i,s_i) \in u_i\cap v_i$ and let $w_i$ be the vertical segment in $\gamma$ containing $(y_i,s_i)$.  For $i=1,...,k-1$, $w_i$ is not the last vertical segment in $\gamma$, $w_i \subset u_i \cup v_i$ and more precisely, $w_i=\{y_i\}\times [\min u_i,\max v_i]$ where $\min u_i$ is the lowest point (in time) on $u_i$ and $\max v_i$ is the highest point on $v_i$; for $i=1,...,k-2$ this follows from the assumption $y_i \neq y_{i+1}$ and for $i=k-1$ it follows from the assumption $y_{k-1}$ is not on the last verical segment of $\gamma_k$.  Except for the segments $w_i$, $i=1,...,k-1$, the segments of $\gamma$ correspond to segments in the paths $\gamma_1,...,\gamma_k$, so it suffices to check for $i=1,..,k-1$ that on $w_i$ there is an activating label i.e. a $\star$ label with no $\times$ labels on $w_i$ after it and no $\star$ labels on $w_i$ before it; we consider separately the cases $\min v_i < \min u_i$, $\min u_i = \min v_i$ and $\min u_i < \min v_i$.\\

If $\min v_i < \min u_i$ then since the base of $\gamma_i$ comes before the base of $\gamma_{i+1}$ there is a vertical segment $u_{i-1}$ in $\gamma_i$ that precedes $u_i$.  Since neither $u_{i-1}$ nor $v_i$ are the last vertical segments on their respective paths, there is a $\star$ label on $u_{i-1}$ with no $\times$ labels on $u_{i-1}$ after it, and a $\star$ label $\ell$ on $v_i$ with the analogous property.  Since $R$ is onset-ordered, it follows that there are no $\star$ labels between the time $\ell$ occurs and $\max v_i$, so $\ell$ cannot occur before $\min u_i$.  Thus $\ell$ occurs after $\min u_i$ and lies on $w_i$.  It is easy to check that $\ell$ is an activating label for $w_i$.\\

If $\min u_i = \min v_i$ then $w_i=v_i$ and $w_i$ inherits the activating label from $v_i$.  If $\min u_i < \min v_i$, then since $\gamma_{i+1}$ does not start on $\gamma_i$, there is a vertical segment $v_{i-1}$ preceding $v_i$.  Any $\star$ label on $u_i$ has no $\times$ labels on $u_i$ after it.  From the same argument as in the previous paragraph with $v_{i-1}$ playing the role of $u_{i-1}$ and $u_i$ playing the role of $v_i$, it follows that there are no $\star$ labels on $u_i\setminus v_i$, and so $w_i$ inherits the activating label from $v_i$.
\end{proof}

After assembling the above ideas, we can prove in a straightforward way the domination of a 1-dependent oriented percolation process.  Once we have shown this is true, the rest of the proof of the upper bound on $\lambda^+$ proceeds in the same way as when $\tau\rightarrow 0$, which we leave to the reader.
\begin{lemma}
For fixed $\lambda>\lambda_c^{\infty}$ and $\epsilon>0$, for $\tau$ large enough the SEIS process with parameters $\lambda,\tau$ dominates a 1-dependent oriented percolation process with parameter $p\geq 1-\epsilon$.
\end{lemma}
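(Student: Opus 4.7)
The plan is to start from the rectangles $R_{m,n}$ and events $A_{m,n}$ supplied by Lemma \ref{limcomp} for the limit process at parameter $\lambda>\lambda_c^\infty$, which give $\mathbb{P}(A_{m,n})\geq 1-\epsilon/2$, are 1-dependent, and carry active paths that concatenate into the domination of oriented percolation. The strategy is to augment each $A_{m,n}$ with an event $B_{m,n}$ that forces the SEIS substructure on $R_{m,n}$ to be onset-ordered in the sense of Definition \ref{defoo}, so that the coupling from the proof of Theorem \ref{thmlimit} can be used block by block to transport the limit-process active paths into potentially active paths for the SEIS process, which can then be promoted and concatenated using Lemmas \ref{potact} and \ref{concat}.

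First I would apply Lemma \ref{onset} with $T$ equal to the time extent of $R_{m,n}$ to conclude $\mathbb{P}(B_{m,n})\to 1$ as $\tau\to\infty$; since $B_{m,n}$ is $\mathcal{F}(R_{m,n})$-measurable, the 1-dependence inherited from $A_{m,n}$ is preserved. Next I would describe an event $\tilde A_{m,n}$ on the SEIS substructure of $R_{m,n}$ obtained by reading the coupling in the proof of Theorem \ref{thmlimit} backwards: each limit-process dispersal event at $(x,t_i)$ is, on the event $B_{m,n}$, exactly a $\star$ label at $(x,t_i)$, a sequence of $\leftrightarrow$ labels reaching each offspring site, and a $\times$ label at $x$ before the next onset anywhere in $R_{m,n}$. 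Thus on $B_{m,n}$ the events $\tilde A_{m,n}$ and $A_{m,n}$ coincide, giving $\mathbb{P}(\tilde A_{m,n}\cap B_{m,n})\geq 1-\epsilon$ for $\tau$ large enough.

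On $\tilde A_{m,n}\cap B_{m,n}$, each limit-process active path translates directly into a potentially active path for the SEIS process in $R_{m,n}$. Provided $\eta_{nT}$ is good for $R_{m,n}$, Lemma \ref{potact} promotes these to genuine active paths, and Lemma \ref{concat} allows them to be concatenated across the overlapping rectangles $R_{m\pm 1,n\pm 1}$ exactly as in the limit-process percolation comparison. Goodness at the base of $R_{0,0}$ is stipulated in the initial condition, and goodness at the base of each subsequent $R_{m,n}$ is propagated inductively from terminal segments of the paths through the previous rectangles; to ensure the endpoints are type 1 rather than type 2 one refines the rectangles to extend slightly below $nT$ and requires the extension to be onset-ordered as well, an event that still depends only on $R_{m,n}$ (after relabelling) and still has probability tending to $1$.

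The main obstacle is precisely this bookkeeping for goodness across the inductive step: one must arrange that the concatenated potentially active paths terminate at exposed sites at each boundary time $nT$, so that the hypothesis of Lemma \ref{potact} continues to hold, while keeping the augmented events 1-dependent and of probability $\geq 1-\epsilon$. This is delicate but follows from the same mechanism as Lemma \ref{onset}: in the rescaled process with $\tau$ large, the total Poisson intensity of $\times$ labels on each fiber is $\tau$, so between consecutive onsets a recovery intervenes with overwhelming probability, and infectious sites occupy only isolated short time intervals whose probability of containing a boundary time $nT$ is $O(1/\tau)$. Once the goodness is propagated, the comparison to 1-dependent oriented site percolation with parameter $p\geq 1-\epsilon$ follows immediately.
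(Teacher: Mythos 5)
Your overall architecture matches the paper's: take the blocks and events from the limit-process comparison (Lemma \ref{limcomp}), use Lemma \ref{onset} to make each block onset-ordered with probability tending to $1$, transfer the limit-process paths to potentially active SEIS paths via the block-wise coupling, then concatenate with Lemma \ref{concat} and promote with Lemma \ref{potact}. The genuine gap is in your treatment of goodness at the block bases. Goodness in Definition \ref{defoo} demands $\eta_{nT}(x)\neq 2$ at \emph{every} site in the base of $R_{m,n}$, not merely at the terminal points of your designated paths; the SEIS process freely creates infectious sites off those paths, so "propagating goodness inductively from terminal segments of the paths" does not control the base configuration. Moreover, your proposed repair --- extending each rectangle slightly below $nT$ and requiring the extension to be onset-ordered --- does not deliver goodness: a site that is infectious at time $nT-\delta$ and receives no $\times$ label in the thin slab is still infectious at $nT$, and onset-ordering (which is vacuous if the slab contains no $\star$ labels) says nothing about this. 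Finally, tying goodness to the process state through an induction makes the block events configuration-dependent rather than substructure-measurable on the blocks, which undercuts both the claimed $1$-dependence and the requirement that the probability bound hold uniformly over entering configurations.

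The paper resolves exactly this point with a different device: a buffer slab $P_{m,n}=(mK,nT)+[-J,J]\times[-\delta,0]$ below each rectangle, and the event, measurable with respect to the substructure on $P_{m,n}$ alone, that \emph{for every possible configuration} at time $nT-\delta$ the configuration at time $nT$ is good for $R_{m,n}$ --- in effect, every site in the slab carries a $\times$ label and no $\star$ labels occur there, which for $\tau$ large and $\delta$ small has probability $\geq 1-\epsilon/3$ and keeps the events $B_{m,n}$ local to $Q_{m,n}=R_{m,n}\cup P_{m,n}$, hence $1$-dependent. This removes any induction on the process state. The paper also sidesteps your endpoint-type bookkeeping entirely: it does not promote paths block by block, but concatenates the potentially active paths across all traversed rectangles first (Lemma \ref{concat}) and applies Lemma \ref{potact} once to the union region, so the hypothesis $\eta_s(y)=1$ is only needed at the base of the full path at time $0$, where it holds by the choice of initial condition. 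You should restructure your argument along these lines: replace the inductive goodness claim by a configuration-uniform buffer-slab event, and defer the promotion to active paths until after concatenation.
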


\begin{proof}
First, construct the limit process $\zeta_t$ and apply the construction of \cite{supercrit} to obtain rectangles $R_{m,n} = (mK,nT)+[-J,J]\times[0,1.2T]$ such that the corresponding events $A_{m,n}$ have $\mathbb{P}(A_{m,n})\geq 1-\epsilon/3$.  As specified more precisely in \cite{supercrit}, each event $A_{m,n}$ corresponds to the existence of some active paths in $R_{m,n}$ going from the base of $R_{m,n}$ to some locations in the top of $R_{m,n}$, such that if $A_{m_i,n_i}=1$ for each $(m_i,n_i)$ in a path $(0,0)=(m_1,n_1),...,(m_k,n_k)$, then the resulting paths intersect to produce an active path from some point $(y,0)$, $y\in [-J,J]$ to a point $(x,t)$ in $R_{m_k,n_k}$.\\

Independently, construct the rescaled SEIS process $\eta_t$ and superimpose onto it the rectangles $R_{m,n}$ from the last paragraph.  For $\delta>0$, define the region $P_{m,n} = (mK,NT)+[-J,J]\times[-\delta,0]$ that lies just below $R_{m,n}$ in spacetime, and say that $P_{m,n}$ is good for $R_{m,n}$ if, depending only on the labelling on $P_{m,n}$, for all possible configurations $\eta_{nT-\delta}$, $\eta_{nT}$ is good for $R_{m,n}$.  It is easy to check that for $\tau$ large enough and $\delta>0$ small enough, with probability $\geq 1-\epsilon/3$, $P_{m,n}$ is good for $R_{m,n}$, due to the resulting plenitude of $\times$ labels and lack of $\star$ labels on $P_{m,n}$.  Suppose $\eta_t$ is good for $R_{m,n}$, then construct a new copy $\zeta_t^{(m,n)}$ of the limit process only on $R_{m,n}$ using the method of Section \ref{seclimit} with initial configuration $\{\eta_{nT}(x):(x,nT) \in R_{m,n}\}$, and note that when $R_{m,n}$ is onset-ordered, to each active path in $\zeta_t^{(m,n)}$ corresponds a potentially active path for $\eta_t$ on $R_{m,n}$ in the obvious way.  Let $Q_{m,n} = R_{m,n}\cup P_{m,n}$ and define new events $B_{m,n}$ on $Q_{m,n}$ by
\begin{equation*}
B_{m,n} = \{P_{m,n}\textrm{ is good for }R_{m,n},\,\,R_{m,n}\textrm{ is onset-ordered and }A_{m,n}\textrm{ holds for }\zeta_t^{(m,n)}\}
\end{equation*}
If $\tau$ is large enough then $\mathbb{P}(R_{m,n}\textrm{ is onset-ordered }) \geq 1-\epsilon/3$, and if $\delta>0$ is small enough then the oriented percolation model defined by the events $A_{m,n}'$ is 1-dependent, so for $\tau$ large enough and $\delta>0$ small enough, $A_{m,n}'$ is 1-dependent and $P(A_{m,n}') \geq 1-\epsilon$.\\

If $\eta_t$ is good for a finite collection of sets then it is good for the union of those sets.  Moreover, if $(m_1,n_1),...,(m_k,n_k)$ is a lattice path, $R_{m_1,n_1},...,R_{m_k,n_k}$ are onset-ordered and $A_{m_1,n_1},...,A_{m_k,n_k}$ holds for $\zeta_t^{(m_1,n_1)},...,\zeta_t^{(m_k,n_k)}$, then it is easy to see from the geometry of the construction in \cite{supercrit} that the potentially active paths for $\eta_t$ in the rectangles $R_{m_1,n_1},...,R_{m_k,n_k}$ corresponding to the relevant active paths for $\zeta_t^{(m_1,n_1)},...,\zeta_t^{(m_k,n_k)}$ satisfy the conditions of Lemma \ref{concat}.  Thus if $B_{m_1,n_1}\cap...\cap B_{m_k,n_k}$ holds for a path $(m_1,n_1),...,(m_k,n_k)$ then in each of the rectangles $R_{m_1,n_1},...,R_{m_k,n_k}$ there is a potentially active path for $\eta_t$, such that the concatenation of those paths is a potentially active path through the union $R_{m_1,n_1}\cup....\cup R_{m_k,n_k}$ which, since $\eta_t$ is good for the union, is an active path for $\eta_t$, and the desired stochastic domination follows.  
\end{proof}

\subsection{Lower bound on $\lambda^-$ as $\tau\rightarrow\infty$}
An argument given in \cite{basic} shows that the coupling property described in the proof of Lemma \ref{limcomp} implies that $\lambda_c^{\infty} = \sup\{\lambda:\alpha > 0\}$, where $\alpha = \lim_{t\rightarrow\infty}l_t^+/t$; note that our definition of critical value agrees with the one in \cite{basic}, namely as survival with positive probability, started from a single infectious site.  With this fact, the proof given in \cite{ziezold} that $\lambda_m \uparrow \lambda_c$ applies without modification to the limit process.  We then proceed in the same way as when $\tau\rightarrow 0$.  In this case we shall have $S$ as before, $S_0 = \{\eta \in \{0,1,2,3\}:\eta(x) \in \{0,1\}, x=1,...,m+1\}$ and $S_1 = \{\eta \in \{0,1,2,3\}:\eta(x)=2 \textrm{ for some }x \textrm{ and }\eta(y) \notin \{2,3\}, y\neq x\}$, and modify $X_n$ when $\tau=\infty$ to include intermediate transitions through the infectious state and propagation before recovery.  Since the proof is analogous, the details are omitted.

\section*{Acknowledgements}
The author's research is partly supported by an NSERC PGSD2 Scholarship.
\bibliography{SEISbasics}
\bibliographystyle{plain}
\end{document}